\renewcommand{\@seccntformat}[1]{\bf\csname the#1\endcsname.}
\renewcommand{\section}{\@startsection{section}{1}
	\z@{.7\linespacing\@plus\linespacing}{.5\linespacing}
	{\normalfont\upshape\bfseries\centering}}
\renewcommand{\@biblabel}[1]{\@ifnotempty{#1}{#1.}}
\theoremstyle{plain}
\newtheorem{thm}{Theorem}[section]
\newtheorem{lem}[thm]{Lemma}
\newtheorem{prop}[thm]{Proposition}
\theoremstyle{definition}
\newtheorem{defn}[thm]{Definition}
\newtheorem{rem}{Remark}[section]
\def\>{\succ}
\def\<{\prec}
\begin{document}
	\title[AFI Maha\textsuperscript{1}, Sania Asif\textsuperscript{2}, Chouaibi Sami\textsuperscript{3}, Basdouri Imed\textsuperscript{4}]{On the cohomology based on the generalized representations of $n$-Lie Algebras}
	\author{AFI Maha\textsuperscript{1}, Sania Asif\textsuperscript{2}, Chouaibi Sami\textsuperscript{3}, Basdouri Imed\textsuperscript{4}}
	\address{\textsuperscript{1}University of Sfax, Faculty of sciences of Sfax, departement of Mathematics, BP 1171, Sfax, Tunisia.} \address{\textsuperscript{2}School of Mathematics and statistics, Nanjing University of Information science and technology, Nanjing, Jiangsu Province, PR China.} \address{\textsuperscript{3}University of Sfax, Faculty of sciences  of Sfax, departement of Mathematics, BP 1171, Sfax, Tunisia.} \address{\textsuperscript{4} University of Gafsa, Faculty of sciences  of Gafsa, departement of Mathematics, Zarroug 2112, Gafsa, Tunisia.}
	\email{maha\_2015afi@hotmail.com\textsuperscript{1}}
	\email{11835037@zju.edu.cn\textsuperscript{2}}\email{chouaibi\_sami@yahoo.fr\textsuperscript{3}}\email{basdourimed@yahoo.fr\textsuperscript{4}}
	\keywords{$n$-Lie algebra, Generalized representation, Cohomology, cocycle.}
	\subjclass[2000]{Primary 11R52, 15A99, 17B67,17B10, Secondary 16G30}
	\date{\today}
	\thanks{
	}
	\begin{abstract}In the present paper, we define the new class of representation on $n$-Lie algebra that is called as generalized representation. We study the cohomology theory corresponding to generalized representations of $n$-Lie algebras and show its relation with the cohomology corresponding to the usual representations. Furthermore, we provide the computation for the low dimensional cocycles.
	\end{abstract}
	\footnote{
	}
	\maketitle

 \section{Introduction}
The notion to the $ n $-ary operation, or more precisely the ternary operation, is well known since 19th century.  It was first introduced by A. Cayley in its study about cubic matrix. By definition, the $ n $-ary algebra is a $ \mathbb{K} $-vector space equipped with an $ n $-linear map. The most well known $ n $-ary algebra is the $ n $-Lie algebra that is determined by Filippov in 1985. In \cite{F}, Filippov studied that an $ n $-Lie algebra is a generalization of the Lie algebra (with $n=2$), where the regular Lie bracket is replaced by the $ n $-ary bracket. Filippov also studied the solvability and nilpotency of the $ n $-Lie algebras. Moreover, $n$-Lie algebra is an algebraic structure related to Nambu mechanics, where the Hamiltonian system is based on a ternary product see \cite{NY}. For the algebraic formulation of Nambu mechanics see \cite{G, T}.
 \\\hspace*{0.5cm}
There has been a growing interest in the study of $n$-Lie algebra. The series of numerous researches over the years have  contributed a lot to recognize the structural theory of $n$-Lie algebra see \cite{BGS,CT,H,ST}. More specifically, Bai in \cite{BW}, studied the construction of $3$-Lie algebra by using linearity of the ordinary Lie algebra. The classification of the $(n+2)$-dimensional $n$-Lie algebras and the representation theory of $n$-Lie algebras were studied in \cite{BSZ}, where the adjoint representation was introduced with the aid of $n$-bracket having $(n-1)$ fixed elements. The  deformation theory of $n$-Lie algebras was studied in \cite{FJ, MA, TL}. In addition, the structure of Leibniz algebras was obtained by considering the notion of fundamental objects in \cite{DT}. The structure of a graded Lie algebra on the cochain complex of a $n$-Leibniz algebra was further determined in \cite{RM}, where the author defined the  $n$-Leibniz algebra as a canonical structure, see \cite{AI2} for more details.

 \par In the present paper, we study the new class of representation that is called a generalized representation of the $n$-Lie algebra $\mathfrak{g}$ over the vector space $V$. We also study cohomology of $n$-Lie algebra with coefficients in the generalized representation. Furthermore, we compute its $1$-cocycles, and $2$-cocycles. The paper is organized as follows.
In section $2$, we recall some important definitions and notions of $n$-Lie algebra. By using the usual representation we define the generalized representations of $n$-Lie algebra. This leads us to discuss a new class of Cohomology of $n$-Lie algebras. Furthermore, we describe $n$-Lie algebra's structure by using graded Lie algebra structure. In section $3$, we establish the cohomology related to the new representation of $n$-Lie algebra that is called generalized representation. Later on we determine the computation of $1$-cocycles, and $2$-cocycles of $n$-Lie algebra.
\par In this paper, all the
vector spaces are over $\mathbb{K}$, where $\mathbb{K}$ is algebraically closed field  with the characteristic $0$.
 \section{$n$-Lie algebras and their representations}
In this section we first recall some important definitions  and notions about $n$-Lie algebra. Later, we define generalized representation of $n$-Lie algebra and discuss its cohomology space. At the end of this section, we study the graded Lie algebra associated to $n$-Lie algebra.
\begin{defn}\label{m1}\cite{LSZB}
~~\\
\rm
A $n$-Lie algebra $\mathfrak{g}$ is a vector space together with an $n$-multilinear skew-symmetric bracket $[\cdot,~\cdots,~\cdot]_{\mathfrak{g}}: \wedge^{n}\mathfrak{g}\rightarrow \mathfrak{g}$ such that for all $x_{i},~y_{i} \in \mathfrak{g}$, the following Filippov identity holds:
\begin{equation}\label{1}
 [x_{1},~\cdots,~x_{n-1},~[y_{1},~\cdots,~y_{n}]_{\mathfrak{g}}]_{\mathfrak{g}}=\displaystyle\sum^{n}_{i=1}
 [y_{1},~\cdots,~y_{i-1},~[x_{1},~\cdots,~x_{n-1},~y_{i}]_{\mathfrak{g}},~y_{i+1},~\cdots,~y_{n}]_{\mathfrak{g}}.
\end{equation}This identity is also known as $\mathbf{Fundamental~ Identity}~(\mathbf{FI}).$
\end{defn}
\begin{defn}\label{m2}\cite{AI2}
~~\\
\rm
A skew-symmetric element $X \in \wedge^{n-1}\mathfrak{g}$ is called fundamental object of a $n$-Lie algebra or $\mathbf{FA}$~(Filippov algebra). It is determined by $(n-1)$ elements $x_{1},~\cdots,~x_{n-1}$ of $\mathfrak{g}$.
Let $X,~Y \in \wedge^{n-1}\mathfrak{g}$ be two fundamental objects of $n$-Lie algebra, then (non associative composition ) $[X,~Y]_{F}\in  \wedge^{n-1}\mathfrak{g}$  of these two objects  is the bi and $i$-linear map $\wedge^{n-1}\mathfrak{g}\otimes \wedge^{n-1}\mathfrak{g}\rightarrow \wedge^{n-1}\mathfrak{g}$, given by the sum
\begin{equation}\label{2}
  [X,~Y]_{F} = \displaystyle\sum^{n-1}_{i=1}(y_{1},~\cdots,~X\cdot y_{i},~\cdots,~y_{n-1})=\displaystyle\sum^{n-1}_{i=1} (y_{1},~\cdots,~[x_{1},~\cdots,~x_{n-1},~y_{i}]_{\mathfrak{g}},~\cdots,~y_{n-1}),
\end{equation}
with $X= x_{1}\wedge\cdots\wedge x_{n-1} $ and $ Y = y_{1}\wedge\cdots\wedge y_{n-1}.$
\end{defn}

\begin{defn}\label{m4}\cite{BSZ}
   ~~\\
\rm
 A representation $\rho$ of a $n$-Lie algebra $\mathfrak{g}$ on a vector space $V$ is a multilinear map $\rho: \wedge^{n-1}\mathfrak{g} \rightarrow \mathfrak{gl}(V)$, such that for all $X,~Y\in \wedge^{n-1}\mathfrak{g},~ x_{i},~y_{i} \in \mathfrak{g}$, the following equalities hold:
\begin{equation}\label{3,4}
 \rho(X)\rho(Y)-\rho(Y)\rho(X)= \displaystyle\sum^{n-1}_{i=1} \rho(y_{1},~\cdots,~y_{i-1},~[x_{1},~\cdots,~x_{n-1},~y_{i}]_{\mathfrak{g}},~y_{i+1},~\cdots,~y_{n-1}),~
\end{equation} \begin{equation}
   \rho(x_{1},~\cdots,~x_{n-2},~[y_{1},~\cdots,~y_{n}]_{\mathfrak{g}})=\displaystyle\sum^{n}_{i=1}(-1)^{n-i}\rho(y_{1},~\cdots,~\hat{y_{i}},~\cdots,~y_{n})
\rho(x_{1},~\cdots,~x_{n-2},~y_{i}),
\end{equation}
where $\hat{y_{i}}$ shows the omission of  $y_{i}$.
\end{defn}

\begin{defn}\label{m12}
  ~~\\
\rm
A generalized representation of a $n$-Lie algebra $\mathfrak{g}$ on a vector space V consists of linear maps $\rho:\wedge^{n-1}\mathfrak{g}\rightarrow \mathfrak{gl}(V)$ and $\vartheta:\mathfrak{g}\rightarrow Hom(\wedge^{n-1}V,~V)$, such that
$$[\pi+\bar{\rho}+\bar{\vartheta},~ \pi+\bar{\rho}+ \bar{\vartheta}]^{nLie}=0,$$
where $\bar{\vartheta}:\wedge^{n}(\mathfrak{g}\oplus V) \rightarrow \mathfrak{g}\oplus V$ is induced by $\vartheta$ via
\begin{equation}\label{10}
\bar{\vartheta} (x_{1}+v_{1},~\cdots,~x_{n}+v_{n} )=\displaystyle\sum_{i=1}^{n}(-1)^{n-i}\vartheta(x_{i})(v_{1}\wedge\cdots\wedge\hat{v_{i}}\wedge\cdots\wedge v_{n}),\end{equation}
$\bar{\rho}: \wedge^{n}(\mathfrak{g}\oplus V) \rightarrow \mathfrak{g}\oplus V$ is induced by $\rho$ via
 \begin{equation}\label{10.}
\bar{\rho} (x_{1}+v_{1},~\cdots,~x_{n}+v_{n} )=\displaystyle\sum_{i=1}^{n}(-1)^{n-i}\rho(x_{1},~\cdots,~\hat{x_{i}},~\cdots,~x_{n})(v_{i}),
\end{equation}
$$\forall ~x_{i}\in \mathfrak{g}, ~v_{i}\in V ~and~ i\in \{1,~\cdots,~n\}.$$
And  the map $\pi: \wedge^{n}\mathfrak{g}\rightarrow \mathfrak{g}$ defines $n$-Lie bracket.\\
We denote the  generalized representation by a triplet $(V; ~\rho, ~\vartheta)$.
\end{defn}A $p$-cochain on $\mathfrak{g}$ with the c{\oe}fficients in a representation $(V;~\rho)$ is a linear map
$$ \alpha^{p}: \wedge^{n-1}\mathfrak{g}\overbrace{\otimes \cdots \otimes }^{(p-1)}\wedge^{n-1}\mathfrak{g}\wedge \mathfrak{g} \rightarrow V.$$
We denote the space of $p$-cochains by $C^{p-1}(\mathfrak{g};V)$. The coboundary operator $\delta_{\rho}: C^{p-1}(\mathfrak{g};V)\rightarrow C^{p}(\mathfrak{g};V)$ is given by

 $\delta_{\rho} \alpha^{p}(\chi_{1},~\cdots,~\chi_{p},~z) $
\begin{eqnarray}\label{6}
&=&\displaystyle\sum^{}_{1\leq i<k}(-1)^{i}\alpha^{p}(\chi_{1},~\cdots,~\hat{\chi_{i}} ,~\cdots,~\chi_{k-1},~[\chi_{i},~\chi_{k}]_{F},~\chi_{k+1},~\cdots,~\chi_{p},~z)\nonumber\\
 &&+ \displaystyle\sum^{p}_{i=1}(-1)^{i}\alpha^{p}(\chi_{1},~\cdots,~\hat{\chi_{i}},~\cdots,~\chi_{p},~[\chi_{i},~z])\nonumber\\
&&+\displaystyle\sum^{p}_{i=1}(-1)^{i+1}\rho(\chi_{i})\alpha^{p}(\chi_{1},~\cdots,~\hat{\chi_{i}},~\cdots,~\chi_{p},~z)\nonumber\\
&&+\displaystyle\sum^{n-1}_{i=1}(-1)^{n+p-i+1}\rho(x^{1}_{p},~x^{2}_{p},~\cdots,~ \hat{x^{i}_{p}},~\cdots,~x^{n-1}_{p},~z)\alpha^{p}(\chi_{1},~\cdots,~\chi_{p-1},~x^{i}_{p}),
\end{eqnarray}
for all $\chi_{i}=(x^{1}_{i},~  x^{2}_{i},~\cdots,~ x^{n-1}_{i})\in \wedge^{n-1}\mathfrak{g}$ and $z\in \mathfrak{g}$.\\
 If the set of $p$-cocycles and the set of $p$-coboundaries are denoted by $Z^{p}(\mathfrak{g};~ V)$ and $B^{p}(\mathfrak{g};~ V)$ respectively, then the $p$-th cohomology space is $$H^{p}( \mathfrak{g};~ V)= Z^{p}(\mathfrak{g};~ V)/ B^{p}(\mathfrak{g};~ V).$$
For more detail about the cohomology of $n$-Lie algebra see \cite{AI1}.\\
The structure of a graded Lie algebra was determined in \cite{RM}, where the  $n$-Leibniz structure was further described as a canonical structure. The accurate formulas of $n$-Lie algebra are as follows;

Let $L_{p}=C^{p}(\mathfrak{g},~\mathfrak{g})= Hom(\wedge^{n-1}\mathfrak{g}\overbrace{\otimes\cdots \otimes}^{(p)}\wedge^{n-1}\mathfrak{g}\wedge\mathfrak{g},~\mathfrak{g})$ and $L= \oplus_{p\geq0} L_{p}$. Let $\alpha \in C^{p}(\mathfrak{g},~\mathfrak{g}),~ \beta \in C^{q}(\mathfrak{g},~\mathfrak{g}),~p,~q\geq0$. Let $\mathfrak{X}_{i}=x_{i}^{1}\wedge \cdots \wedge x_{i}^{n-1}\in  \wedge^{n-1}\mathfrak{g}$ for $i=1,~\cdots,~ p+q$ and $z\in \mathfrak{g}$.
For each subset $J=\{j_{1},~\cdots,~j_{q+1}\}_{j_{1}<\cdots<j_{q+1}}\subset N\triangleq \{1,~2,~\cdots,~p+q+1\},$\\ let $I=\{i_{1},~\cdots,~i_{p}\}_{i_{1}<\cdots<i_{p}}=N / J$, then we have

\begin{thm}\label{m7}\cite{RM}
  ~~\\
\rm
The graded vector space $L$ equipped with the graded commutator bracket
\begin{align}\label{7}
  [\alpha,~\beta]^{nLie}=(-1)^{pq}\alpha\circ\beta -\beta\circ\alpha,
\end{align}
is a graded Lie algebra and $\alpha\circ\beta\in L^{p+q}$ is defined by\\
$\alpha\circ\beta (\mathfrak{X}_{1},~\cdots,~ \mathfrak{X}_{p+q},~z)=\displaystyle\sum_{J,~j_{q+1}<p+q+1}(-1)^{(J,~I)}\sum_{s=1} ^{n-1} (-1)^{k}\\
 \alpha( \mathfrak{X}_{i_{1}},~\cdots,~ \mathfrak{X}_{i_{k}},~x_{j_{q+1}}^{1}~ \wedge~\cdots~\wedge ~x_{j_{q+1}}^{s-1}~\wedge~ \beta(\mathfrak{X}_{j_{1}},~\cdots,~ \mathfrak{X}_{j_{q}},~x_{j_{q+1}}^{s} )~\wedge~ x_{j_{q+1}}^{s+1} ~ \wedge ~\cdots~\wedge ~x_{j_{q+1}}^{n-1},~ \mathfrak{X}_{i_{k+1}},~\cdots,~  \mathfrak{X}_{i_{p-1}},~z)\\ +\displaystyle\sum_{J,~j_{q+1}=p+q+1}(-1)^{(J,~I)}(-1)^{p}\alpha(\mathfrak{X}_{i_{1}},~\cdots,~ \mathfrak{X}_{i_{p}},~\beta(\mathfrak{X}_{j_{1}},~\cdots,~ \mathfrak{X}_{j_{q}},~z)),$\\
where $k$ is uniquely determined by the condition $i_{k}<j_{q+1}<i_{k+1}$. \begin{itemize}
	\item If $j_{q+1} <i_{1}$, i.e. $j_{q+1}=q+1,~i_{1}=q+2$ then $k=0$.
	\item If $j_{q+1} >i_{p-1},$ i.e. $j_{q+1}=p+q $ then $k=p-1.$
\end{itemize}
We can use the graded Lie algebra structure $(L,~[\cdot,~\cdot]^{nLie})$ to describe $n$-Lie algebra structures as well as coboundary operators.
\end{thm}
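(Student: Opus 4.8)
The plan is to prove the statement in two stages: first that $\circ$ is a graded \emph{right-symmetric} (pre-Lie) product on $L=\oplus_{p\ge 0}L_p$, and then to invoke the formal fact that the commutator \eqref{7} of such a product is automatically a graded Lie bracket; the final remark about describing $n$-Lie structures and the coboundary \eqref{6} is then just a comparison of formulas. Graded antisymmetry of \eqref{7} needs neither \eqref{1} nor the explicit shape of $\circ$: for $\alpha\in L_p$, $\beta\in L_q$ one reads off
\[
(-1)^{pq}[\beta,\alpha]^{nLie}=(-1)^{pq}\bigl((-1)^{pq}\beta\circ\alpha-\alpha\circ\beta\bigr)=\beta\circ\alpha-(-1)^{pq}\alpha\circ\beta=-[\alpha,\beta]^{nLie},
\]
which is the graded antisymmetry of a degree-$0$ bracket on the graded space $L$.

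For the graded Jacobi identity I would use the purely formal principle that if a graded vector space carries a degree-$0$ product $\circ$ whose associator
\[
\mathrm{As}(\alpha,\beta,\gamma):=(\alpha\circ\beta)\circ\gamma-\alpha\circ(\beta\circ\gamma)
\]
is graded-symmetric in its last two slots, i.e.\ $\mathrm{As}(\alpha,\beta,\gamma)=(-1)^{qr}\mathrm{As}(\alpha,\gamma,\beta)$ for $\beta\in L_q$, $\gamma\in L_r$, then \eqref{7} obeys the graded Jacobi identity — a direct expansion of its graded Jacobiator regroups, once the trivially cancelling monomials are removed, into associators which then cancel in pairs. Thus the whole theorem comes down to showing that $\circ$ is graded right-symmetric.

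That is the computational core. Fix $\alpha\in L_p$, $\beta\in L_q$, $\gamma\in L_r$ and expand both $(\alpha\circ\beta)\circ\gamma$ and $\alpha\circ(\beta\circ\gamma)$ by the defining formula; each becomes a sum over two successive admissible subsets, that is, over ordered ways of substituting the blocks $\beta$ and $\gamma$ into the slots of $\alpha$ (the generic slots $\mathfrak{X}_i$ and the distinguished slot $z$). These terms fall into two families: the \emph{nested} ones, where $\gamma$ is substituted into one of the $\wedge^{n-1}\mathfrak{g}$-entries produced by $\beta$, and the \emph{disjoint} ones, where $\beta$ and $\gamma$ land in different slots of $\alpha$. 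Using multilinearity of $\beta$ one matches the nested terms of $(\alpha\circ\beta)\circ\gamma$ bijectively with all of $\alpha\circ(\beta\circ\gamma)$, so that in $\mathrm{As}(\alpha,\beta,\gamma)$ only the disjoint terms remain; and a disjoint double substitution is unchanged under swapping the two insertions except for the Koszul sign $(-1)^{qr}$ picked up when the $\beta$-block is moved past the $\gamma$-block. This gives $\mathrm{As}(\alpha,\beta,\gamma)=(-1)^{qr}\mathrm{As}(\alpha,\gamma,\beta)$ and, with the previous paragraph, the theorem; the closing remark then follows by checking that $\pi\in L_1$ with $[\pi,\pi]^{nLie}=0$ is precisely the Filippov identity \eqref{1} and that $[\pi,\,\cdot\,]^{nLie}$ encodes the coboundary operators \eqref{6}.

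Everything substantial is the sign bookkeeping in the last step: one must verify (i) that the shuffle sign $(-1)^{(J,I)}$ of the outer insertion times that of the inner insertion equals the shuffle sign of the combined insertion — an associativity property of the Koszul sign on shuffles — and (ii) that the exceptional summand $j_{q+1}=p+q+1$ (substitution into the distinguished argument $z$, carrying the extra $(-1)^p$) is handled on the same footing as the generic summands, with the position index $k$ in $(-1)^{k}$ tracked correctly when blocks are inserted near $z$. Once these sign identities are isolated, the bijective matching of the nested terms and the $\beta\leftrightarrow\gamma$ symmetry of the disjoint terms are both forced; alternatively one can bypass the bookkeeping altogether by realizing $L$ as a space of coderivations of the relevant cofree coalgebra, where \eqref{7} is a commutator of coderivations by construction.
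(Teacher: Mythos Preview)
The paper does not prove this theorem: it is stated with the citation \cite{RM} and no proof is given, so there is no ``paper's own proof'' to compare against. Your strategy --- show that $\circ$ is graded right pre-Lie and then invoke the formal fact that the graded commutator of a right-symmetric product satisfies the graded Jacobi identity --- is the standard and correct route for results of this Gerstenhaber type, and it is essentially how Rotkiewicz's original argument in \cite{RM} proceeds (there phrased via an auxiliary Leibniz-type complex). Your identification of the nested/disjoint dichotomy and the claim that only the disjoint double-insertions survive in the associator are right.

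Two cautions on the sketch. First, you correctly flag that the whole weight lies in the sign identities (i) and (ii), but you do not actually carry them out; in this particular formula the sign $(-1)^{(J,I)}$ is a shuffle sign on \emph{blocks} of length one, and the extra $(-1)^k$ and $(-1)^p$ record the position of the inserted block relative to the surviving $\mathfrak{X}_{i_\bullet}$'s --- the associativity you need is genuinely a statement about compositions of such shuffles, and the boundary cases $k=0$ and $k=p-1$ (and the interaction with the distinguished last slot $z$) must be checked separately, not absorbed into a single Koszul-sign slogan. Second, your alternative of realizing $L$ as coderivations of a cofree coalgebra is clean in spirit but not quite off-the-shelf here: the domain $\wedge^{n-1}\mathfrak{g}\otimes\cdots\otimes\wedge^{n-1}\mathfrak{g}\wedge\mathfrak{g}$ is not the cofree conilpotent cocommutative coalgebra on $\mathfrak{g}$, so one has to identify the correct comonad (roughly, the one governing Leibniz $n$-algebras / the tensor coalgebra on $\wedge^{n-1}\mathfrak{g}$ tensored with $\mathfrak{g}$) before that shortcut is available. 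Either fill in the explicit sign check or make the coalgebra precise; the overall architecture is sound.
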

\begin{lem}\label{m8}
 ~~\\
\rm
  The map $\pi: \wedge^{n}\mathfrak{g}\rightarrow \mathfrak{g}$ defines $n$-Lie bracket if and only if $[\pi,~\pi]^{nLie}=0$, i.e. $\pi$ is canonical structure.
\end{lem}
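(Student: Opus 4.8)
I plan to view $\pi$ as a homogeneous element of the graded Lie algebra $(L,[\cdot,\cdot]^{nLie})$ of Theorem~\ref{m7} and to read off $[\pi,\pi]^{nLie}$ from the explicit composition formula \eqref{7}. A skew-symmetric $n$-linear map $\pi\colon\wedge^{n}\mathfrak{g}\to\mathfrak{g}$ is the same datum as an element of $L_{1}=C^{1}(\mathfrak{g},\mathfrak{g})=\operatorname{Hom}(\wedge^{n-1}\mathfrak{g}\wedge\mathfrak{g},\mathfrak{g})$, so in \eqref{7} one has $p=q=1$, and
\[
[\pi,\pi]^{nLie}=(-1)^{1\cdot1}\,\pi\circ\pi-\pi\circ\pi=-2\,\pi\circ\pi .
\]
Since $\operatorname{char}\mathbb{K}=0$, the condition $[\pi,\pi]^{nLie}=0$ is equivalent to $\pi\circ\pi=0$. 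Moreover $\pi\circ\pi\in L_{2}=\operatorname{Hom}(\wedge^{n-1}\mathfrak{g}\otimes\wedge^{n-1}\mathfrak{g}\wedge\mathfrak{g},\mathfrak{g})$ is determined by its values on decomposable arguments $\mathfrak{X}_{1}=x^{1}_{1}\wedge\cdots\wedge x^{n-1}_{1}$, $\mathfrak{X}_{2}=x^{1}_{2}\wedge\cdots\wedge x^{n-1}_{2}$ and $z\in\mathfrak{g}$, so it suffices to analyse $\pi\circ\pi(\mathfrak{X}_{1},\mathfrak{X}_{2},z)$.

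Next I would expand $\pi\circ\pi(\mathfrak{X}_{1},\mathfrak{X}_{2},z)$ directly from Theorem~\ref{m7}, running over the subsets $J\subset N=\{1,2,3\}$ with $|J|=q+1=2$, namely $J\in\{\{2,3\},\{1,3\},\{1,2\}\}$, with complementary singletons $I=N\setminus J$. The subsets $J=\{2,3\}$ and $J=\{1,3\}$ satisfy $j_{q+1}=3=p+q+1$ and fall into the second sum of \eqref{7}; they yield the two nested terms $\pi(\mathfrak{X}_{1},\pi(\mathfrak{X}_{2},z))$ and $\pi(\mathfrak{X}_{2},\pi(\mathfrak{X}_{1},z))$ with signs $(-1)^{(J,I)}(-1)^{p}$, which come out opposite to one another. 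The subset $J=\{1,2\}$ has $j_{q+1}=2<3$ and falls into the first sum; here $\beta=\pi$ is inserted into the single remaining component slot of $\mathfrak{X}_{2}$ (the integer $k$ of Theorem~\ref{m7} being $0$), and this contributes $\sum_{s=1}^{n-1}\pi\!\left(x^{1}_{2}\wedge\cdots\wedge\pi(\mathfrak{X}_{1},x^{s}_{2})\wedge\cdots\wedge x^{n-1}_{2},\,z\right)$. Collecting the three groups and using the skew-symmetry of $\pi$ to put the entries in standard order, the equation $\pi\circ\pi(\mathfrak{X}_{1},\mathfrak{X}_{2},z)=0$ becomes, after the relabelling $x^{j}_{1}\mapsto x_{j}$, $x^{j}_{2}\mapsto y_{j}$, $z\mapsto y_{n}$, exactly the Fundamental Identity \eqref{1} of Definition~\ref{m1}; conversely \eqref{1} forces every such evaluation to vanish. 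This gives the stated equivalence: $\pi$ defines an $n$-Lie bracket $\iff[\pi,\pi]^{nLie}=0$, i.e. $\pi$ is a canonical structure.

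The only delicate point is the sign bookkeeping in the middle step: one must verify that the Koszul permutation signs $(-1)^{(J,I)}$ attached to the subsets of $\{1,2,3\}$, together with the factors $(-1)^{p}$ and $(-1)^{k}$ coming from \eqref{7} and the overall factor $-2$, combine so that the two nested terms receive opposite signs and the insertion sum receives precisely the sign prescribed by \eqref{1}. Because $p+q=1$ this is a short finite check (three subsets, a single internal summation), and once it is done the remainder of the argument is a routine unwinding of the definitions.
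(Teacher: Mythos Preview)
The paper does not supply a proof of Lemma~\ref{m8}; it is stated as a known fact (implicitly drawn from the graded Lie algebra formalism of~\cite{RM}) and is immediately followed by Remark~\ref{m9}. So there is nothing to compare against at the level of argument.

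Your proposal is the standard and correct way to establish the lemma. Identifying $\pi\in L_{1}$, reducing $[\pi,\pi]^{nLie}=0$ to $\pi\circ\pi=0$ via \eqref{7} (using $\operatorname{char}\mathbb{K}=0$), and then unpacking $\pi\circ\pi(\mathfrak{X}_{1},\mathfrak{X}_{2},z)$ with the three two-element subsets of $\{1,2,3\}$ is exactly how one recovers the Fundamental Identity~\eqref{1}. Your identification of which subsets fall into the ``$j_{q+1}=p+q+1$'' branch and which into the insertion branch, and the observation that $k=0$ in the latter, are all correct. The only caveat you already flag yourself: the Koszul signs $(-1)^{(J,I)}$ are not defined in the present paper (they are inherited from~\cite{RM}), so in a self-contained write-up you should either state the convention explicitly or simply display the three resulting terms with their signs and verify by hand that they assemble into~\eqref{1}. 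That finite check is genuinely short, and once it is written out there is nothing else to do.
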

\begin{rem}\label{m9}
 ~~\\
\rm
  Let $\mathfrak{g}$ be a $n$-Lie algebra. For $x_{1},~\cdots,~x_{n-1}\in \mathfrak{g}$, define an adjoint map $ad:\wedge^{n-1}\mathfrak{g}\rightarrow \mathfrak{gl}(\mathfrak{g})$ by
$$ ad_{x_{1},~\cdots,~x_{n-1}} y=[x_{1},~\cdots,~x_{n-1},~y]_{\mathfrak{g}},~~~~\forall y\in \mathfrak{g}.$$
The map $ad$ defines an adjoint representation of $\mathfrak{g}$.
The coboundary operator related to this representation is denoted by $\delta_{\mathfrak{g}}$.
\end{rem}

\begin{lem}\label{m10}
  ~~\\
\rm
If $\pi:\wedge^{n}\mathfrak{g}\rightarrow \mathfrak{g}$ is a $n$-Lie bracket, then we have
\begin{align}\label{8}
 [\pi,~\alpha]^{nLie } &=\delta_{\mathfrak{g}}(\alpha),~~~~\forall \alpha\in C^{p}(\mathfrak{g},~\mathfrak{g}),~ p\geq0.
\end{align}

\end{lem}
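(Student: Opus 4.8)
The plan is to prove the formula \eqref{8} by evaluating both sides on an arbitrary argument $(\mathfrak{X}_1,\ldots,\mathfrak{X}_{p+1},z)$, with $\mathfrak{X}_i=x_i^1\wedge\cdots\wedge x_i^{n-1}\in\wedge^{n-1}\mathfrak{g}$ and $z\in\mathfrak{g}$, and then matching the two explicit expressions that result, term by term. Since the bracket $\pi$ lies in $L_1$, specialising the graded commutator \eqref{7} to $\alpha\in L_p$ and $\beta=\pi$ gives $[\pi,\alpha]^{nLie}=(-1)^{p}\pi\circ\alpha-\alpha\circ\pi\in L_{p+1}$, so it suffices to expand $\pi\circ\alpha$ and $\alpha\circ\pi$ from Theorem \ref{m7} and compare the outcome with formula \eqref{6} taken for $\rho=ad$ (Remark \ref{m9}). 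Note that the Fundamental Identity \eqref{1} is \emph{not} needed for this step: \eqref{8} is a purely combinatorial identity between two polynomial expressions in $\pi$ and $\alpha$; \eqref{1} enters only afterwards, via Lemma \ref{m8}, in order to conclude that $\delta_{\mathfrak{g}}\circ\delta_{\mathfrak{g}}=0$.

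First I would expand $\alpha\circ\pi$ via Theorem \ref{m7} with $\alpha$ playing the role of ``$\alpha$'' (degree $p$) and $\pi$ that of ``$\beta$'' (degree $q=1$), so the summation runs over the two-element subsets $J=\{j_1<j_2\}$ of $\{1,\ldots,p+2\}$, and it splits into two families. When $j_2=p+2$, the element $z$ is absorbed by $\pi$; since $\pi(\mathfrak{X}_{j_1},z)=[x_{j_1}^1,\ldots,x_{j_1}^{n-1},z]_{\mathfrak{g}}$, renaming $j_1$ as $i$ reproduces the second sum of \eqref{6}, namely $\sum_i(-1)^{i}\alpha(\mathfrak{X}_1,\ldots,\widehat{\mathfrak{X}_i},\ldots,\mathfrak{X}_{p+1},[\mathfrak{X}_i,z])$. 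When $j_2\le p+1$, the map $\pi$ is inserted into the entries of the $j_2$-th fundamental object; carrying out the inner sum over the slot index $s$ and invoking Definition \ref{m2} turns $\sum_s\bigl(x_{j_2}^1,\ldots,[x_{j_1}^1,\ldots,x_{j_1}^{n-1},x_{j_2}^s]_{\mathfrak{g}},\ldots,x_{j_2}^{n-1}\bigr)$ into $[\mathfrak{X}_{j_1},\mathfrak{X}_{j_2}]_F$, which reproduces the first sum $\sum_{i<k}(-1)^{i}\alpha(\mathfrak{X}_1,\ldots,\widehat{\mathfrak{X}_i},\ldots,[\mathfrak{X}_i,\mathfrak{X}_k]_F,\ldots)$ of \eqref{6}.

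Dually, I would expand $\pi\circ\alpha$ via Theorem \ref{m7} with $\pi$ in the role of ``$\alpha$'' (degree $1$) and $\alpha$ in that of ``$\beta$'' (degree $q=p$). Because $\pi$ is a one-cochain, the complementary index set $I=N\setminus J$ is a singleton, so only two configurations occur. If $p+2\in J$, then $\alpha$ absorbs $z$ and $\pi$ is applied to the single leftover object $\mathfrak{X}_{i_1}$, yielding $\pi(\mathfrak{X}_{i_1},\alpha(\ldots,z))=ad(\mathfrak{X}_{i_1})\alpha(\ldots,z)$; these are exactly the terms $\sum_i(-1)^{i+1}ad(\mathfrak{X}_i)\alpha(\mathfrak{X}_1,\ldots,\widehat{\mathfrak{X}_i},\ldots,z)$, the third sum of \eqref{6}. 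If instead $p+2\in I$, then $J=\{1,\ldots,p+1\}$ is forced and $\pi$ is applied to the last object $\mathfrak{X}_{p+1}$ with $\alpha$ plugged into slot $s$; using the skew-symmetry of $[\cdot,\ldots,\cdot]_{\mathfrak{g}}$ to move the output $\alpha(\mathfrak{X}_1,\ldots,\mathfrak{X}_p,x_{p+1}^s)$ out of slot $s$ of $[x_{p+1}^1,\ldots,x_{p+1}^{n-1},z]_{\mathfrak{g}}$ into the last entry produces the fourth sum $\sum_i(-1)^{n+p-i+1}ad(x_{p+1}^1,\ldots,\widehat{x_{p+1}^i},\ldots,x_{p+1}^{n-1},z)\alpha(\mathfrak{X}_1,\ldots,\mathfrak{X}_p,x_{p+1}^i)$ of \eqref{6}. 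Adding the four families then yields exactly $\delta_{\mathfrak{g}}(\alpha)(\mathfrak{X}_1,\ldots,\mathfrak{X}_{p+1},z)$.

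The only real work, and the step I expect to be the main obstacle, is the sign bookkeeping. For each shuffle one must check that the Koszul sign $(-1)^{(J,I)}$, combined with the internal signs $(-1)^{k}$ and $(-1)^{p}$ appearing in Theorem \ref{m7} and, in the last family, with the transposition signs created when reordering the $n$-ary bracket, collapses to precisely $(-1)^{i}$, $(-1)^{i}$, $(-1)^{i+1}$ and $(-1)^{n+p-i+1}$, respectively. I would handle this by writing $(-1)^{(J,I)}=(-1)^{\sum_{j\in J}|\{i\in I:\, i<j\}|}$ and evaluating this exponent explicitly in each configuration (for instance it equals $(-1)^{j_1+p+1}$ when $J=\{j_1,p+2\}$, and $1$ when $J=\{1,\ldots,p+1\}$), after which the required cancellations are immediate. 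As a consistency check one may first treat the case $p=0$ by hand: there $\alpha\in Hom(\mathfrak{g},\mathfrak{g})$, the first sum of \eqref{6} is vacuous, and the three remaining families of terms are read off directly from $\pi\circ\alpha$ and $\alpha\circ\pi$, recovering $\delta_{\mathfrak{g}}(\alpha)(\mathfrak{X}_1,z)$.
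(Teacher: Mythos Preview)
Your proposal is correct and follows the same approach as the paper. The paper states Lemma \ref{m10} without a dedicated proof, but in the proof of the forgetful-map proposition in Section 3 it carries out exactly this computation (with a general $\rho$ in place of $ad$): expanding $(\pi+\bar\rho+\bar\vartheta)\circ\alpha$ and $\alpha\circ(\pi+\bar\rho+\bar\vartheta)$ via Theorem \ref{m7}, splitting each into the two cases $j_{q+1}<p+2$ and $j_{q+1}=p+2$, and identifying the resulting four families of terms with the four sums of \eqref{6}, just as you describe.
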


 \section{New Cohomology Complex of $n$-Lie Algebras}
The purpose of this section is to construct new class of cohomology of $n$-Lie algebras by using the generalized representations. \\Let  $(\mathfrak{g},~[\cdot,~\cdots,~\cdot]_{\mathfrak{g}})$ be $n$-Lie algebras and $(V;~\rho,~\vartheta)$ be a generalized representation of $\mathfrak{g}$. We set
$C^{p} _{>}(\mathfrak{g}\oplus V,~V)$ to be the set of $(p+1)$-cochains, which is defined as a subset of $C^{p} (\mathfrak{g}\oplus V,~V) $
such that
\begin{equation}\label{}
 C^{p} (\mathfrak{g}\oplus V,~V)=  C^{p} _{>}(\mathfrak{g}\oplus V,~V)\oplus C^{p} ( V,~V).
\end{equation}
With   $ C^{p} (\mathfrak{g}\oplus V,~V)=\{\alpha: \wedge^{n-1}(\mathfrak{g}\oplus V) \overbrace{\otimes  \cdots \otimes} ^{p~ times} \wedge^{n-1}(\mathfrak{g}\oplus V) \wedge (\mathfrak{g}\oplus V)\rightarrow V \}.$ \\
By direct calculation, we have
$$[\pi+\bar{\rho}+\bar{\vartheta},~C^{p}_{>}(\mathfrak{g}\oplus V,~V)]^{nLie}\subseteq  C^{p+1}_{>}(\mathfrak{g}\oplus V,~V)~~~ \forall~ p\geq 0.$$
Indeed, let\\
 $$\left\{\begin{array}{ll}\alpha \in C^{p}_{>}(\mathfrak{g}\oplus V,~V),~ \textit{then} ~\alpha \in C^{p}(\mathfrak{g}\oplus V,~V)~ \textit{such that }~\alpha|_{C^{p} ( V,~V)}=0& \hbox{;} \\
        \pi+\bar{\rho}+\bar{\vartheta}\in C^{1}(\mathfrak{g}\oplus V,~V), ~such ~that~  \pi+\bar{\rho}+\bar{\vartheta}|_{C^{1} ( V,~V)}=0 & \hbox{.}
      \end{array}
    \right.$$\\
Hence by applying Theorem  \ref{m7} we have \\
$[ \pi+\bar{\rho}+\bar{\vartheta},~\alpha]^{nLie} \in C^{p+1}(\mathfrak{g}\oplus V,~V)$  such that $[ \pi+\bar{\rho}+\bar{\vartheta},~\alpha]^{nLie}|_{C^{p+1} ( V,~V)}=0, $ \\
then $[ \pi+\bar{\rho}+\bar{\vartheta},~\alpha]^{nLie}\in  C^{p+1}_{>}(\mathfrak{g}\oplus V,~V)   $

Define another operator $$d: C^{p}_{>}(\mathfrak{g}\oplus V,~V)\rightarrow C^{p+1}_{>}(\mathfrak{g}\oplus V,~V)$$ such that
\begin{align}
d(\alpha)&:=[ \pi+\bar{\rho}+\bar{\vartheta},~\alpha]^{nLie}  ~~~ \alpha\in C^{p}_{>}(\mathfrak{g}\oplus V,~V).
\end{align}
\begin{prop}\label{m20}
   ~~\\
\rm
Let $\mathfrak{g}$ be a $n$-Lie algebra, and $(V;~\rho,~\vartheta)$ be a generalized representation of $\mathfrak{g}$. Then $d\circ d=0.$ Therefore, we obtain a new cohomology complex, where $ C^{p-1}_{>}(\mathfrak{g}\oplus V,~V)$ is the space of $p$-cochains.
\end{prop}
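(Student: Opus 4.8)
The plan is to recognize the operator $d$ as the adjoint action $[\Pi,\cdot\,]^{nLie}$ of the single element $\Pi:=\pi+\bar{\rho}+\bar{\vartheta}$ sitting inside the graded Lie algebra attached to the vector space $\mathfrak{g}\oplus V$, and then to deduce $d\circ d=0$ purely formally from the graded Jacobi identity together with the Maurer--Cartan--type equation $[\Pi,\Pi]^{nLie}=0$. No property of $\pi$ beyond this last equation will be used.

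First I would set $L':=\oplus_{p\geq 0}C^{p}(\mathfrak{g}\oplus V,~\mathfrak{g}\oplus V)$ and observe that Theorem \ref{m7}, applied verbatim with the vector space $\mathfrak{g}\oplus V$ in place of $\mathfrak{g}$ (that theorem uses only the linear structure of the underlying space), makes $(L',~[\cdot,~\cdot]^{nLie})$ a graded Lie algebra. The maps $\pi\colon\wedge^{n}\mathfrak{g}\to\mathfrak{g}$, $\bar{\rho}$ and $\bar{\vartheta}$, all extended by zero, are elements of $C^{1}(\mathfrak{g}\oplus V,~\mathfrak{g}\oplus V)=L'_{1}$, so $\Pi=\pi+\bar{\rho}+\bar{\vartheta}\in L'_{1}$ is homogeneous of odd degree $1$; and $[\Pi,\Pi]^{nLie}=0$ is precisely the defining condition of the generalized representation $(V;~\rho,~\vartheta)$ in Definition \ref{m12}. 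Finally, after composition with the inclusion $V\hookrightarrow\mathfrak{g}\oplus V$, any $\alpha\in C^{p}_{>}(\mathfrak{g}\oplus V,~V)$ is an element of $L'_{p}$, so $d(\alpha)=[\Pi,~\alpha]^{nLie}$ is computed inside $L'$, and by the discussion immediately preceding the proposition it again lies in $C^{p+1}_{>}(\mathfrak{g}\oplus V,~V)$.

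The core computation then takes three lines. Apply the graded Jacobi identity of $(L',~[\cdot,~\cdot]^{nLie})$ to the triple $(\Pi,~\Pi,~\alpha)$; since $|\Pi|=1$ is odd this reads
\begin{align*}
[\Pi,~[\Pi,~\alpha]^{nLie}]^{nLie}=[[\Pi,~\Pi]^{nLie},~\alpha]^{nLie}-[\Pi,~[\Pi,~\alpha]^{nLie}]^{nLie},
\end{align*}
whence $2\,[\Pi,~[\Pi,~\alpha]^{nLie}]^{nLie}=[[\Pi,~\Pi]^{nLie},~\alpha]^{nLie}=[0,~\alpha]^{nLie}=0$. Since $\mathbb{K}$ has characteristic $0$ we may divide by $2$ and obtain $d(d(\alpha))=[\Pi,~[\Pi,~\alpha]^{nLie}]^{nLie}=0$ for every $\alpha$, i.e. $d\circ d=0$. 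Combined with the already established inclusion $d\bigl(C^{p}_{>}(\mathfrak{g}\oplus V,~V)\bigr)\subseteq C^{p+1}_{>}(\mathfrak{g}\oplus V,~V)$, this shows that $\bigl(C^{\bullet}_{>}(\mathfrak{g}\oplus V,~V),~d\bigr)$ — with $C^{p-1}_{>}(\mathfrak{g}\oplus V,~V)$ regarded as the space of $p$-cochains — is a cochain complex, which is the announced new cohomology complex.

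The one genuinely delicate point, and the step I would check most carefully, is the bookkeeping of degrees: one must verify that in the grading used in Theorem \ref{m7} the three summands $\pi$, $\bar{\rho}$, $\bar{\vartheta}$ all have the \emph{same} degree, equal to $1$, so that $\Pi$ is homogeneous and the Jacobi sign $(-1)^{|\Pi||\Pi|}=-1$ produces the crucial factor $2$. This is where the off-by-one between the word ``$p$-cochain'' and the space $C^{p-1}$ must be tracked. Everything else is a formal consequence of the graded Jacobi identity and the Maurer--Cartan equation.
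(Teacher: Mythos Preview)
Your proposal is correct and follows essentially the same approach as the paper's own proof: both compute $d\circ d(\alpha)=[\Pi,[\Pi,\alpha]^{nLie}]^{nLie}$ with $\Pi=\pi+\bar{\rho}+\bar{\vartheta}$, apply the graded Jacobi identity of Theorem~\ref{m7} to obtain $2[\Pi,[\Pi,\alpha]^{nLie}]^{nLie}=[[\Pi,\Pi]^{nLie},\alpha]^{nLie}$, and then invoke the defining condition $[\Pi,\Pi]^{nLie}=0$ of the generalized representation. Your write-up is in fact slightly more careful than the paper's, since you make explicit that the graded Lie algebra structure is the one on $\mathfrak{g}\oplus V$, that $\Pi$ is homogeneous of degree~$1$, and that the division by~$2$ uses the characteristic-zero hypothesis.
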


\begin{proof}
   ~~\\
\rm In order to show  $d\circ d(\alpha)=0,$
consider that $\alpha \in C^{p-1}_{>}(\mathfrak{g}\oplus V,~V).$
By calculating the graded Jacobi identity associated to the  graded commutator bracket in Theorem \ref{m7}, we have
\begin{align*}
  d\circ d (\alpha) &=[\pi+\bar{\rho}+\bar{\vartheta},~d(\alpha)]^{nLie}=[\pi+\bar{\rho}+\bar{\vartheta},~[\pi+\bar{\rho}+\bar{\vartheta},~\alpha]^{nLie}]^{nLie}\\
&=[[\pi+\bar{\rho}+\bar{\vartheta},~\pi+\bar{\rho}+\bar{\vartheta}]^{nLie},~ \alpha]^{nLie}-[\pi+\bar{\rho}+\bar{\vartheta},~[\pi+\bar{\rho}+\bar{\vartheta},~ \alpha]^{nLie}]^{nLie}
\end{align*}
So, we have
\begin{align*}
2[\pi+\bar{\rho}+\bar{\vartheta},~[\pi+\bar{\rho}+\bar{\vartheta},~ \alpha]^{nLie}]^{nLie}&=[[\pi+\bar{\rho}+\bar{\vartheta},~\pi+\bar{\rho}+\bar{\vartheta}]^{nLie},~ \alpha]^{nLie}\\
\Rightarrow [\pi+\bar{\rho}+\bar{\vartheta},~[\pi+\bar{\rho}+\bar{\vartheta},~ \alpha]^{nLie}]^{nLie}&=\frac{1}{2} [[\pi+\bar{\rho}+\bar{\vartheta},~\pi+\bar{\rho}+\bar{\vartheta}]^{nLie},~ \alpha]^{nLie}\\
&=0.
\end{align*}
Hence the proof is complete.
\end{proof}

Note that
\begin{eqnarray*}
  \bullet ~\mathcal{Z}_{p}(\mathfrak{g};~V)&=&\{\alpha\in C_{>}^{p-1}(\mathfrak{g}\oplus V,~V )~|~ d(\alpha)=0\}\\
 &=& the~ set ~of ~p-cocycles.\\
  \bullet ~\mathcal{B}_{p}(\mathfrak{g};~V)&=&\{\alpha\in C_{>}^{p-1}(\mathfrak{g}\oplus V,~V )~|~ \exists~ \beta \in C_{>}^{p-2}(\mathfrak{g}\oplus V,~V ) ~such ~that ~\alpha=d(\beta)\}\\
 &=& the~ set ~of ~p-coboundaries.
\end{eqnarray*}
 Also the $p$-th cohomology group  $\mathcal{H}_{p}(\mathfrak{g};~V)$ is defined by $\mathcal{Z}_{p}(\mathfrak{g};~V)/\mathcal{B}_{p}(\mathfrak{g};~V).$
\begin{prop}\label{21}
    ~~\\
\rm
Let $\mathfrak{g}$ be a $n$-Lie algebra, we have $\mathcal{B}_{p}(\mathfrak{g};~V)\subset \mathcal{Z}_{p}(\mathfrak{g};~V).$
\end{prop}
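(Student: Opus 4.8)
The statement to prove is that $\mathcal{B}_p(\mathfrak{g};V) \subseteq \mathcal{Z}_p(\mathfrak{g};V)$, i.e.\ every $p$-coboundary is a $p$-cocycle. This is the standard ``coboundaries are cocycles'' fact, and the proof is immediate from Proposition~\ref{m20}, which established $d \circ d = 0$.

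\medskip

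The plan is as follows. Let $\alpha \in \mathcal{B}_p(\mathfrak{g};V)$. By definition of $\mathcal{B}_p(\mathfrak{g};V)$, there exists $\beta \in C_{>}^{p-2}(\mathfrak{g}\oplus V,~V)$ such that $\alpha = d(\beta)$. First I would apply the operator $d$ to both sides, obtaining $d(\alpha) = d(d(\beta)) = (d \circ d)(\beta)$. Next, I would invoke Proposition~\ref{m20}, which states precisely that $d \circ d = 0$ on $C_{>}^{\bullet}(\mathfrak{g}\oplus V,~V)$; hence $(d \circ d)(\beta) = 0$. Therefore $d(\alpha) = 0$, which by the definition of $\mathcal{Z}_p(\mathfrak{g};V)$ means $\alpha \in \mathcal{Z}_p(\mathfrak{g};V)$. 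Since $\alpha$ was an arbitrary element of $\mathcal{B}_p(\mathfrak{g};V)$, this gives the desired inclusion.

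\medskip

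There is essentially no obstacle here: the entire content is packaged into Proposition~\ref{m20}, and what remains is just chasing definitions. The only points requiring minor care are bookkeeping ones --- namely checking that the degree shifts line up (a $p$-coboundary comes from a $(p-2)$-cochain in the $C_{>}^{\bullet}$ indexing convention, on which $d$ raises cochain degree by one, so $d(\beta) \in C_{>}^{p-1}$ lands in the correct space for a $p$-cochain, and $d(\alpha) \in C_{>}^{p}$ is then the relevant target), and confirming that $\beta$ indeed lies in the subcomplex $C_{>}^{\bullet}(\mathfrak{g}\oplus V,~V)$ rather than the full complex, so that Proposition~\ref{m20} applies verbatim. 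Both are immediate from the definitions of $\mathcal{B}_p$ and of the subcomplex given just before Proposition~\ref{m20}. So the proof is a two-line consequence of $d^2 = 0$.
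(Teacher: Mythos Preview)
Your proposal is correct and follows essentially the same approach as the paper: take $\alpha \in \mathcal{B}_p(\mathfrak{g};V)$, write $\alpha = d(\beta)$ for some $\beta \in C_{>}^{p-2}(\mathfrak{g}\oplus V,~V)$, and apply Proposition~\ref{m20} to get $d(\alpha) = d^2(\beta) = 0$, whence $\alpha \in \mathcal{Z}_p(\mathfrak{g};V)$.
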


\begin{proof}
      ~~\\
\rm
 By using Proposition \ref{m20}, Let $\alpha \in \mathcal{B}_{p}(\mathfrak{g};~V)$ such that $\alpha \in C_{>}^{p-1}(\mathfrak{g}\oplus V,~V )$ then
there exists $\beta \in  C_{>}^{p-2}(\mathfrak{g}\oplus V,~V )$ such that
                                     $$ \alpha=d(\beta),$$

hence
$$  d(\alpha)=d^{2}(\beta)=0, $$
this implies that  $\alpha\in \mathcal{Z}_{p}(\mathfrak{g};~V),$ which finishes the proof.
\end{proof}

Now we show that, how the cohomology complexes $\mathcal{H}_{p}(\mathfrak{g};~V)$ and  $ H^{p}(\mathfrak{g};~V)$ of the  $n$-Lie algebra  are interrelated. For this consider the following proposition.
\begin{prop}
        ~~\\
\rm
We can found a forgetful map from $\mathcal{H}_{p}(\mathfrak{g};~V)$ to $ H^{p}(\mathfrak{g};~V)$.
\end{prop}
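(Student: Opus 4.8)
The plan is to realize the forgetful map at the level of cochains as a restriction-of-arguments map and then to check that it is a morphism of complexes. For each $p\geq1$ define a linear map
$$\Phi\colon C^{p-1}_{>}(\mathfrak{g}\oplus V,~V)\longrightarrow C^{p-1}(\mathfrak{g};~V),\qquad \Phi(\alpha)(\mathfrak{X}_{1},~\cdots,~\mathfrak{X}_{p-1},~z)=\alpha(\mathfrak{X}_{1},~\cdots,~\mathfrak{X}_{p-1},~z),$$
where on the right each $\mathfrak{X}_{i}$ is regarded as an element of $\wedge^{n-1}\mathfrak{g}\subset\wedge^{n-1}(\mathfrak{g}\oplus V)$ and $z\in\mathfrak{g}\subset\mathfrak{g}\oplus V$. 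Since $\alpha$ takes values in $V$ and is multilinear and skew in the required slots, $\Phi(\alpha)$ is genuinely a $p$-cochain for the ordinary representation $(V;~\rho)$, so $\Phi$ is well defined, and it is obviously linear.

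The crucial step is the identity $\Phi\circ d=\delta_{\rho}\circ\Phi$. I would prove it by evaluating both sides on an arbitrary tuple $(\mathfrak{X}_{1},~\cdots,~\mathfrak{X}_{p},~z)$ with all entries in $\mathfrak{g}$ and expanding $d(\alpha)=[\pi+\bar{\rho}+\bar{\vartheta},~\alpha]^{nLie}$ via \eqref{7}. Three structural observations do the work. First, $\pi$ only sees the $\mathfrak{g}$-components of its inputs, so the summand $\pi\circ\alpha$ vanishes on $\mathfrak{g}$-arguments (one of the entries fed to $\pi$ is the $V$-valued output of $\alpha$), while $\alpha\circ\pi$ yields exactly the first two sums of \eqref{6}, i.e.\ the $[\chi_{i},~\chi_{k}]_{F}$-terms and the $[\chi_{i},~z]$-terms. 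Second, by \eqref{10} the map $\bar{\vartheta}$ is zero unless at least $n-1$ of its entries lie in $V$, so for $n\geq3$ both $\bar{\vartheta}\circ\alpha$ and $\alpha\circ\bar{\vartheta}$ vanish once all $\mathfrak{X}_{i},z$ lie in $\mathfrak{g}$ (the case $n=2$, where a single $V$-entry already makes $\bar{\vartheta}$ nonzero, must be treated separately or excluded). Third, by \eqref{10.} the map $\bar{\rho}$ vanishes on all-$\mathfrak{g}$ entries, so $\alpha\circ\bar{\rho}$ contributes nothing, whereas $\bar{\rho}\circ\alpha$ — feeding the single $V$-valued output of $\alpha$ into the last slot, respectively into one slot of the last fundamental object — reproduces precisely the last two sums of \eqref{6}, those of the form $\rho(\chi_{i})\alpha(\cdots)$ and $\rho(x^{1}_{p},~\cdots,~\hat{x^{i}_{p}},~\cdots,~z)\alpha(\cdots,~x^{i}_{p})$. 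After these reductions the only thing left to verify is that the shuffle signs $(-1)^{(J,~I)}$ in \eqref{7} collapse to the explicit signs $(-1)^{i}$, $(-1)^{i+1}$ and $(-1)^{n+p-i+1}$ appearing in \eqref{6}; this purely combinatorial sign-matching, together with reconciling the degree/indexing conventions of Theorem \ref{m7} with those of \eqref{6}, is where essentially all of the difficulty sits, and is the step I expect to be the main obstacle.

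Granting the chain-map identity, the conclusion is immediate: from $\Phi(d\beta)=\delta_{\rho}(\Phi\beta)$ one gets $\Phi(\mathcal{Z}_{p}(\mathfrak{g};~V))\subseteq Z^{p}(\mathfrak{g};~V)$ and $\Phi(\mathcal{B}_{p}(\mathfrak{g};~V))\subseteq B^{p}(\mathfrak{g};~V)$, so $\Phi$ descends to the quotients and induces a well-defined linear map
$$\Phi_{*}\colon \mathcal{H}_{p}(\mathfrak{g};~V)\longrightarrow H^{p}(\mathfrak{g};~V),$$
which is the desired forgetful map. Conceptually $\Phi_{*}$ is nothing but pullback of cochains along the subalgebra inclusion $\mathfrak{g}\hookrightarrow\mathfrak{g}\oplus V$ of the split extension determined by $\pi+\bar{\rho}+\bar{\vartheta}$: restricting all arguments to $\mathfrak{g}$ annihilates every contribution of $\vartheta$ and keeps only $\rho$, so the generalized representation gets ``forgotten'' down to the ordinary one. (Observe that $\Phi_{*}$ need be neither injective nor surjective; the statement only claims existence.)
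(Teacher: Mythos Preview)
Your approach is essentially the paper's: define the forgetful map as restriction of arguments to $\mathfrak{g}$ and verify it intertwines $d$ with $\delta_{\rho}$ by expanding $[\pi+\bar{\rho}+\bar{\vartheta},\alpha]^{nLie}$ and observing that $\bar{\vartheta}$ and $\alpha\circ\bar{\rho}$ drop out on pure $\mathfrak{g}$-inputs. The paper carries out the explicit sign-matching you flag as the main obstacle (working only with $\alpha\in C^{p}(\mathfrak{g};V)$ and then invoking the projection), whereas you argue directly for arbitrary $\alpha\in C^{p}_{>}$; the two are equivalent by linearity, and your $n=2$ caveat is a fair point the paper leaves implicit.
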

\begin{proof}
 ~~\\
\rm
Obviously, we have $C^{p}(\mathfrak{g};~V)\subseteq C^{p}_{>}(\mathfrak{g} \oplus V;~V)$. For any $\alpha \in  C^{p}(\mathfrak{g};~V),~\mathfrak{X}_{i}\in \wedge^{n-1} \mathfrak{g} $ and $z\in \mathfrak{g},$ we have
$$ d(\alpha)=[\pi+\bar{\rho}+\bar{\vartheta},~\alpha]^{nLie}=(-1)^{p} \pi+\bar{\rho}+\bar{\vartheta}\circ\alpha - \alpha\circ\pi+\bar{\rho}+\bar{\vartheta}. $$

\begin{enumerate}
  \item Let $J=\{j_{1},~\cdots,~j_{p+1}\}\subset N \triangleq \{1,~2,~\cdots,~p+2\}$ and $I=\{i_{1}\}=N/J$ we have
  $\pi+\bar{\rho}+\bar{\vartheta}\circ\alpha(\mathfrak{X}_{1},~\cdots,~\mathfrak{X}_{p+1},~z)$
\begin{eqnarray*}
 &=& \displaystyle\sum_{J,~j_{p+1}<p+2}\sum_{s=1}^{n-1 } (-1)^{(J,~I)}(-1)^{k} \\&& \pi+\bar{\rho}+\bar{\vartheta}(x^{1}_{j_{p+1}}\wedge ~\cdots\wedge~x^{s-1}_{j_{p+1}}\wedge ~\alpha(\mathfrak{X}_{j_{1}}\wedge ~\cdots
\wedge ~\mathfrak{X}_{j_{p}}\wedge ~x^{s}_{j_{p+1}})\wedge ~x^{s+1}_{j_{p+1}}\wedge ~\cdots\wedge~x^{n-1}_{j_{p+1}},~z)\\&&-\displaystyle\sum_{J,~j_{p+1}=p+2} (-1)^{(J,~I)}
\pi+\bar{\rho}+\bar{\vartheta}(\mathfrak{X}_{i_{1}},~\alpha(  \mathfrak{X}_{j_{1}} ,~  \cdots,~\mathfrak{X}_{j_{p}},~z))\\
&=& \displaystyle\sum_{J,~j_{p+1}<p+2}\sum_{s=1}^{n-1 }(-1)^{n-s} \\&&\rho(x^{1}_{j_{p+1}},~\cdots, x^{s-1}_{j_{p+1}},~\hat{x^{s}_{j_{p+1}}},~x^{s+1}_{j_{p+1}},~\cdots, ~x^{n-1}_{j_{p+1}},~z)\alpha(\mathfrak{X}_{j_{1}},~\cdots
,~\mathfrak{X}_{j_{p}},~x^{s}_{j_{p+1}})\\&&-\displaystyle\sum_{J,~j_{p+1}=p+2} (-1)^{p+1-(i_{1}+1)}
\rho(\mathfrak{X}_{i_{1}})\alpha(  \mathfrak{X}_{j_{1}} ,~  \cdots,~\mathfrak{X}_{j_{p}},~z))\\
&=&  \displaystyle\sum_{J,~j_{p+1}<p+2}\sum_{s=1}^{n-1 }(-1)^{n-s} \\&&\rho(x^{1}_{j_{p+1}},~\cdots, x^{s-1}_{j_{p+1}},~\hat{x^{s}_{j_{p+1}}},~x^{s+1}_{j_{p+1}},~\cdots, ~x^{n-1}_{j_{p+1}},~z)\alpha(\mathfrak{X}_{j_{1}},~\cdots
,~\mathfrak{X}_{j_{p}},~x^{s}_{j_{p+1}})\\&&+\displaystyle\sum_{s=1}^{p+1}(-1)^{p-s+1}
\rho(\mathfrak{X}_{s})\alpha(  \mathfrak{X}_{1} ,~  \cdots,~ \hat{\mathfrak{X}_{s}} ,~\cdots,~\mathfrak{X}_{p+1},~z)\\
&=& \sum_{s=1}^{n-1 }(-1)^{n-s} \\&&\rho(x^{1}_{p+1},~\cdots,~ x^{s-1}_{p+1},~\hat{x^{s}_{p+1}},~x^{s+1}_{p+1},~\cdots, ~x^{n-1}_{p+1},~z)\alpha(\mathfrak{X}_{1},~\cdots
,~\mathfrak{X}_{p},~x^{s}_{p+1})\\&&+\displaystyle\sum_{s=1}^{p+1}(-1)^{p-s+1}
\rho(\mathfrak{X}_{s})\alpha(  \mathfrak{X}_{1} ,~  \cdots,~ \hat{\mathfrak{X}_{s}} ,~\cdots,~\mathfrak{X}_{p+1},~z).
\end{eqnarray*}
Then
\begin{equation*}
\begin{aligned}
(-1)^{p}\pi+\bar{\rho}+\bar{\vartheta}\circ\alpha&(\mathfrak{X}_{1},~\cdots,~\mathfrak{X}_{p+1},~z)\\& =  \displaystyle \sum_{s=1}^{n-1 } (-1)^{p+n-s}\\&\rho(x^{1}_{p+1},~\cdots,~\hat{x^{s}_{p+1}},~\cdots,~x^{n-1}_{p+1},~z)\alpha(\mathfrak{X}_{1},~\cdots
,~\mathfrak{X}_{p},~x^{s}_{p+1})\\&+\displaystyle\sum_{s=1}^{p+1}(-1)^{s+1}
\rho(\mathfrak{X}_{s})\alpha(  \mathfrak{X}_{1} ,~  \cdots,~ \hat{\mathfrak{X}_{s}} ,~\cdots,~\mathfrak{X}_{p+1},~z).
\end{aligned}
\end{equation*}

\item Let $J=\{j_{1},~j_{2}\}_{j_{1}<j_{2}} \subset N\triangleq \{1,~\cdots,~p+2\}$ and $I=\{i_{1},~\cdots,~i_{p}\}=N/J$ we have\\
\end{enumerate}
$\alpha\circ\pi+\bar{\rho}+\bar{\vartheta} (\mathfrak{X}_{1},~\cdots,~\mathfrak{X}_{p+1},~z)$
\begin{eqnarray*}
&=&\displaystyle\sum_{J,~j_{2}<p+2}\sum_{s=1}^{n-1 } (-1)^{(J,~I)} (-1)^{k}\\&&
           \alpha  ( \mathfrak{X}_{i_{1}},~\cdots,~\mathfrak{X}_{i_{k}},~ x^{1}_{j_{2}}\wedge\cdots\wedge x^{s-1}_{j_{2}}\wedge\pi+\bar{\rho}+\bar{\vartheta}( \mathfrak{X}_{j_{1}},~x^{s}_{j_{2}})\wedge x^{s+1}_{j_{2}}\wedge  \cdots \wedge x^{n-1}_{j_{2}},~\mathfrak{X}_{i_{k+1}},~ \cdots ,~ \mathfrak{X}_{i_{p-1}},~z)\\
&&+\displaystyle\sum_{J,~j_{2}=p+2}  (-1)^{(J,~I)} (-1)^{p} \alpha(\mathfrak{X}_{i_{1}},~\cdots,~\mathfrak{X}_{i_{p}},~\pi+\bar{\rho}+\bar{\vartheta}(\mathfrak{X}_{j_{1}},~z))\\
&=&\displaystyle\sum_{J,~j_{2}<p+2}\sum_{s=1}^{n-1 } (-1)^{(J,~I)} (-1)^{k}\\&&
           \alpha  ( \mathfrak{X}_{i_{1}},~\cdots,~\mathfrak{X}_{i_{k}},~ x^{1}_{j_{2}}\wedge\cdots\wedge x^{s-1}_{j_{2}}\wedge \pi( \mathfrak{X}_{j_{1}},~x^{s}_{j_{2}})\wedge x^{s+1}_{j_{2}}\wedge  \cdots \wedge x^{n-1}_{j_{2}},~\mathfrak{X}_{i_{k+1}},~ \cdots ,~ \mathfrak{X}_{i_{p-1}},~z)\\
&&+\displaystyle\sum_{J,~j_{2}=p+2}  (-1)^{(J,~I)} (-1)^{p} \alpha(\mathfrak{X}_{i_{1}},~\cdots,~\mathfrak{X}_{i_{p}},~\pi(\mathfrak{X}_{j_{1}},~z))\\
&=& \displaystyle\sum_{J,~1\leq j_{1}< j_{2}<p+2} (-1)^{(J,~I)} (-1)^{k}
 \alpha  ( \mathfrak{X}_{i_{1}},~\cdots,~\mathfrak{X}_{i_{k}},~[\mathfrak{X}_{j_{1}},~ \mathfrak{X}_{j_{2}} ]_{F},~\mathfrak{X}_{i_{k+1}},~ \cdots ,~ \mathfrak{X}_{i_{p-1}},~z)\\
&&+\displaystyle\sum_{J,~j_{2}=p+2}  (-1)^{(J,~I)} (-1)^{p} \alpha(\mathfrak{X}_{i_{1}},~\cdots,~\mathfrak{X}_{i_{p}},~\pi(\mathfrak{X}_{j_{1}},~z)).\\
&=& \displaystyle\sum_{J,~1\leq j_{1}< j_{2}<p+2} (-1)^{k+1-j_{1}}
 \alpha  ( \mathfrak{X}_{i_{1}},~\cdots,~\mathfrak{X}_{i_{k}},~[\mathfrak{X}_{j_{1}},~ \mathfrak{X}_{j_{2}} ]_{F},~\mathfrak{X}_{i_{k+1}},~ \cdots ,~ \mathfrak{X}_{i_{p-1}},~z)\\
&&+\displaystyle\sum_{J,~j_{2}=p+2} (-1)^{p+1-j_{1}} \alpha(\mathfrak{X}_{i_{1}},~\cdots,~\mathfrak{X}_{i_{p}},~\pi(\mathfrak{X}_{j_{1}},~z)).\\
&=& \displaystyle\sum_{J,~1\leq j_{1}< j_{2}<p+2} (-1)^{1-j_{1}}
 \alpha  ( \mathfrak{X}_{i_{1}},~\cdots,~\mathfrak{X}_{i_{k}},~[\mathfrak{X}_{j_{1}},~ \mathfrak{X}_{j_{2}} ]_{F},~\mathfrak{X}_{i_{k+1}},~ \cdots ,~ \mathfrak{X}_{i_{p-1}},~z)\\
&&+\displaystyle\sum_{J,~j_{2}=p+2}  (-1)^{1-j_{1}} \alpha(\mathfrak{X}_{i_{1}},~\cdots,~\mathfrak{X}_{i_{p}},~\pi(\mathfrak{X}_{j_{1}},~z)).\\
&=&- \displaystyle\sum_{J,~1\leq j_{1}< j_{2}<p+2} (-1)^{j_{1}}
 \alpha  ( \mathfrak{X}_{i_{1}},~\cdots,~\mathfrak{X}_{i_{k}},~[\mathfrak{X}_{j_{1}},~ \mathfrak{X}_{j_{2}} ]_{F},~\mathfrak{X}_{i_{k+1}},~ \cdots ,~ \mathfrak{X}_{i_{p-1}},~z)\\
&&-\displaystyle\sum_{J,~j_{2}=p+2}  (-1)^{j_{1}} \alpha(\mathfrak{X}_{i_{1}},~\cdots,~\mathfrak{X}_{i_{p}},~\pi(\mathfrak{X}_{j_{1}},~z)).
\end{eqnarray*}
Hence, we obtain the result that  $$ d(\alpha)(\mathfrak{X}_{1},~\cdots,~\mathfrak{X}_{p+1},~z) =\delta_{\rho}(\alpha)(\mathfrak{X}_{1},~\cdots,~\mathfrak{X}_{p+1},~z),~~~~\forall \alpha\in C^{p}(\mathfrak{g},~V),~ p\geq0.$$
From the above expression, we deduce that coboundary operator $\delta_{\rho}(\alpha)$ and the coboundary operator provided by Eq.\eqref{6} are similar. Hence, the natural projection from $C^{p}_{>}(\mathfrak{g} \oplus V,~V) $ to $C^{p}(\mathfrak{g},~V)$ induces a forgetful map from $\mathcal{H}_{p}(\mathfrak{g};~V)$ to $ H^{p}(\mathfrak{g};~V)$
\end{proof}
Now, we characterize the low dimensional cocycles of  $n$-Lie algebra   $\mathfrak{g} \oplus V$  with values in  $V.$  
\begin{prop}
           ~~\\
\rm
Let $\alpha\in Hom(\mathfrak{g};~V).$ Then $\alpha$ will said to be $1$-cocycle if and only if for $ \mathfrak{X}_{1}=(x_{1},~\cdots,~x_{n-1})\in \wedge^{n-1}\mathfrak{g},~x_{n}\in \mathfrak{g}$, we have the following identity\\
$0=-\alpha([x_{1},~\cdots,~x_{n-1},~x_{n}]_{\mathfrak{g}})+\rho(x_{1},~\cdots,~x_{n-1})(\alpha(x_{n}))+\displaystyle\sum^{n-1}_{i=1}(-1)^{n-i}
\rho(x_{1},~\cdots,~\hat{x_{i}},~\cdots,~x_{n-1},~x_{n})(\alpha(x_{i})).$
\end{prop}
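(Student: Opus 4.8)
The plan is to recognize $\alpha$ as a $1$-cochain of the new complex and to unwind the cocycle condition $d(\alpha)=0$ into the explicit coboundary formula \eqref{6} in its lowest degree. Since $\alpha\in\mathrm{Hom}(\mathfrak{g};V)=C^{0}(\mathfrak{g};V)$ and $C^{0}(\mathfrak{g};V)\subseteq C^{0}_{>}(\mathfrak{g}\oplus V,~V)$, the map $\alpha$ is exactly a $1$-cochain of the complex produced in Proposition \ref{m20}, where $C^{p-1}_{>}(\mathfrak{g}\oplus V,~V)$ is the space of $p$-cochains. By the definition of $\mathcal{Z}_{1}(\mathfrak{g};~V)$, we have $\alpha\in\mathcal{Z}_{1}(\mathfrak{g};~V)$ if and only if $d(\alpha)=[\pi+\bar{\rho}+\bar{\vartheta},~\alpha]^{nLie}=0$.

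Next I would replace $d$ by the ordinary coboundary operator $\delta_{\rho}$. The computation carried out in the proof of the preceding proposition shows that $d(\alpha)=\delta_{\rho}(\alpha)$ for every $\alpha\in C^{p}(\mathfrak{g};~V)$ with $p\geq 0$; taking $p=0$ applies this to our $\alpha$, so that $\alpha$ is a $1$-cocycle if and only if $\delta_{\rho}(\alpha)=0$. (Alternatively, one can expand $[\pi+\bar{\rho}+\bar{\vartheta},~\alpha]^{nLie}$ directly from Theorem \ref{m7}: a degree-zero cochain consumes a single element of $\mathfrak{g}$ and returns an element of $V$, so there are never enough $V$-entries for $\bar{\vartheta}$ to act in $\alpha\circ(\pi+\bar{\rho}+\bar{\vartheta})$ or $(\pi+\bar{\rho}+\bar{\vartheta})\circ\alpha$; only $\pi$ and $\bar{\rho}$ survive, and they reassemble into $\delta_{\rho}$.)

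Finally I would evaluate $\delta_{\rho}(\alpha)$ using \eqref{6} with $p=1$, $\chi_{1}=\mathfrak{X}_{1}=(x_{1},~\cdots,~x_{n-1})$ and $z=x_{n}$. The double sum over $1\leq i<k$ is empty, since it would require $i<k\leq 1$. The sum $\sum_{i=1}^{p}(-1)^{i}\alpha^{p}(\cdots,~[\chi_{i},~z])$ collapses to the single term $-\alpha([x_{1},~\cdots,~x_{n-1},~x_{n}]_{\mathfrak{g}})$, interpreting $[\chi_{1},~z]$ as the action $[x_{1},~\cdots,~x_{n-1},~x_{n}]_{\mathfrak{g}}$; the sum $\sum_{i=1}^{p}(-1)^{i+1}\rho(\chi_{i})\alpha^{p}(\cdots,~z)$ collapses to $+\rho(x_{1},~\cdots,~x_{n-1})(\alpha(x_{n}))$; and the sum $\sum_{i=1}^{n-1}(-1)^{n+p-i+1}\rho(\cdots,~z)\alpha^{p}(\cdots,~x^{i}_{p})$ becomes $\sum_{i=1}^{n-1}(-1)^{n-i}\rho(x_{1},~\cdots,~\hat{x_{i}},~\cdots,~x_{n-1},~x_{n})(\alpha(x_{i}))$, using that $(-1)^{n+p-i+1}=(-1)^{n-i}$ when $p=1$ and that $\alpha$ has no remaining $\chi$-argument. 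Hence $\delta_{\rho}(\alpha)(\mathfrak{X}_{1},~x_{n})$ is precisely the right-hand side of the stated identity, and $\alpha\in\mathcal{Z}_{1}(\mathfrak{g};~V)$ exactly when this expression vanishes for all $\mathfrak{X}_{1}\in\wedge^{n-1}\mathfrak{g}$ and $x_{n}\in\mathfrak{g}$, which is the claim.

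There is no genuine obstacle here; the step most prone to error is the index bookkeeping, namely keeping the convention ``$p$-cochain $\leftrightarrow$ $C^{p-1}$'' straight so that $\mathrm{Hom}(\mathfrak{g};~V)$ is inserted into \eqref{6} with parameter $p=1$, and simplifying the prefactor $(-1)^{n+p-i+1}$ (the signs in the other two sums being $(-1)^{1}=-1$ and $(-1)^{2}=+1$, which match the statement).
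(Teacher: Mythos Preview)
Your proposal is correct and follows essentially the same path as the paper: invoke the identity $d(\alpha)=\delta_{\rho}(\alpha)$ for $\alpha\in C^{0}(\mathfrak{g};V)$ established in the preceding proposition, and then specialize the explicit formula \eqref{6} to $p=1$. The paper's proof is in fact terser than yours---it writes the chain $d(\alpha)(\mathfrak{X}_{1},x_{n})=\delta_{\rho}(\alpha)(\mathfrak{X}_{1},x_{n})$ and immediately records the expanded expression---so your careful tracking of the four sums and of the sign $(-1)^{n+p-i+1}$ is additional detail rather than a departure.
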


\begin{proof}
           ~~\\
\rm
Let $\alpha\in Hom(\mathfrak{g};~V),$ we have
\begin{eqnarray*}
  d(\alpha)(\mathfrak{X}_{1},~x_{n}) &=&\delta_{\rho}(\alpha)(\mathfrak{X}_{1},~x_{n})\\&
=&-\alpha([\mathfrak{X}_{1},~x_{n}]_{\mathfrak{g}})+\rho(\mathfrak{X}_{1})(\alpha(x_{n}))+\displaystyle\sum_{i=1}^{n-1}(-1)^{n-i}\rho(x_{1},~
\cdots,~\hat{x_{i}},~\cdots,~x_{n-1},~x_{n})(\alpha(x_{i}))\\
&=&-\alpha([x_{1},~\cdots,~x_{n-1},~x_{n}]_{\mathfrak{g}})+\rho(x_{1},~\cdots,~x_{n-1})(\alpha(x_{n}))+\displaystyle\sum_{i=1}^{n-1}(-1)^{n-i}\\&&\rho(x_{1},~
\cdots,~\hat{x_{i}},~\cdots,~x_{n-1},~x_{n})(\alpha(x_{i})).\end{eqnarray*}
\end{proof}
\begin{prop}~~\\
\rm
Let $\beta_{1} \in Hom(\wedge^{n-1}V\wedge \mathfrak{g},~V),~\beta_{2} \in Hom(\wedge^{n-1}\mathfrak{g}\wedge V,~V)$ and $\beta_{3} \in Hom(\wedge^{n}\mathfrak{g},~V)$, a 2-cochain $\beta_{1} +\beta_{2} +\beta_{3} \in C_{>}^{1}(\mathfrak{g}\oplus V,~V)$ is a 2-cocycle if and only if for all $x_{i}\in \mathfrak{g},~ u_{i}\in V$ and $h_{i} \in V,$ the following identities hold:
\begin{eqnarray}
  0 &=& -\displaystyle\sum_{s=1}^{n-1} (-1)^{n-s}\rho(y_{1},~\cdots,~\hat{y_{s}},~\cdots,~y_{n-1},~z)(\beta_{3} (x_{1},~\cdots,~x_{n-1},~y_{s}))
\nonumber\\&&\nonumber+\rho(x_{1},~\cdots,~x_{n-1})(\beta_{3}(y_{1},~\cdots,~y_{n-1},~z))- \rho(y_{1},~\cdots,~y_{n-1})(\beta_{3} (x_{1},~\cdots,~x_{n-1},~z))\\&&\nonumber -\displaystyle\sum_{s=1}^{n-1} \beta_{3} (y_{1},~\cdots,~y_{s-1},~[x_{1},~\cdots,~x_{n-1},~y_{s}]_{\mathfrak{g}},~y_{s+1},~\cdots,~y_{n-1},~z)
+\\&&\beta_{3}(x_{1},~\cdots,~x_{n-1},~[y_{1},~\cdots,~y_{n-1},~z]_{\mathfrak{g}})
-\beta_{3}(y_{1},~\cdots,~y_{n-1},~[x_{1},~\cdots,~x_{n-1},~z]_{\mathfrak{g}}),\label{23}
\\0&=&-(-1)^{n-2}\vartheta(y_{2})(\beta_{3}(x_{1},~\cdots,~ x_{n-1},~y_{1}),~h_{3},~\cdots,~h_{n-1},~h_{n})\nonumber\\&&-(-1)^{n-1}\vartheta(y_{1})(\beta_{3}(x_{1},~\cdots,~ x_{n-1},~y_{2}),~h_{3},~\cdots,~h_{n-1},~h_{n}),\label{24}\\
0&=&(-1)^{n-1}\vartheta(x_{1})(u_{2},~\cdots,~u_{n-1},~\beta_{3}(y_{1},~\cdots,~ y_{n-1},~z)),\label{25}\\\nonumber
0&=& \rho(x_{1},~\cdots,~x_{n-1})(\beta_{2}(y_{1},~\cdots,~y_{n-1},~h_{1}))\nonumber\\&&-\rho(y_{1},~\cdots,~y_{n-1})(\beta_{2}(x_{1},~\cdots,~x_{n-1},~h_{1}))
\nonumber \\&&- \displaystyle \sum_{s=1}^{n-1}\beta_{2}(y_{1},~\cdots,~y_{s-1},~[x_{1},~\cdots,~x_{n-1},~y_{s}]_{\mathfrak{g}},~y_{s+1},~\cdots,~y_{n-1},~h_{1})\nonumber\\
&&+\beta_{2}(x_{1},~\cdots,~x_{n-1},~\rho(y_{1},~\cdots,~y_{n-1})(h_{1}))\nonumber\\
&&-\beta_{2}(y_{1},~\cdots,~y_{n-1},~\rho(x_{1},~\cdots,~x_{n-1})(h_{1})),\label{26}
\end{eqnarray}
\begin{eqnarray}
0&=&\rho(y_{1},~\cdots,~y_{n-2},~y_{n})(\beta_{2}(x_{1},~\cdots,~x_{n-1},~h_{n-1}))\nonumber\\&&+
\rho(x_{1},~\cdots,~x_{n-1})(\beta_{2}(y_{1},~\cdots,~y_{n-2},~h_{n-1},~y_{n}))\nonumber\\&&
-\displaystyle\sum_{s=1}^{n-2}\beta_{2}(y_{1},~\cdots,~[x_{1},~\cdots,~x_{n-1},~y_{s}]_{\mathfrak{g}},~\cdots,~y_{n-2},~h_{n-1},~y_{n})\nonumber\\&&
-\beta_{2}(y_{1},~\cdots,~y_{n-2},~\rho(x_{1},~\cdots,~x_{n-1})(h_{n-1}),~y_{n})\nonumber\\&&
+\beta_{2}(x_{1},~\cdots,~x_{n-1},~\rho(y_{1},~\cdots,~y_{n-2},~y_{n})(h_{n-1}))\nonumber\\&&
-\beta_{2}(y_{1},~\cdots,~y_{n-2},~h_{n-1},~[x_{1},~\cdots,~x_{n-1},~y_{n}]_{\mathfrak{g}}),\label{27}\\
0&=&-\displaystyle\sum_{s=2}^{n-1}(-1)^{n-1}\vartheta(y_{1})(h_{2},~\cdots,~h_{s-1},~
\beta_{2}(x_{1},~\cdots,~x_{n-1},~h_{s}),~h_{s+1},~\cdots,~h_{n-1},~h_{n})\nonumber\\
 &&-(-1)^{n-1}\vartheta(y_{1})(h_{2},~\cdots,~h_{n-1},~\beta_{2}(x_{1},~\cdots,~x_{n-1},~h_{n}))\nonumber\\
&&+(-1)^{n-1}\beta_{2}(x_{1},~\cdots,~x_{n-1},~\vartheta(y_{1})(h_{2},~\cdots,~h_{n-1},~h_{n})),\label{28}\\
0&=&-\displaystyle\sum_{s=1}^{n-1}(-1)^{n-s}\rho(y_{1},~\cdots,~\hat{y_{s}},~\cdots,~y_{n-1},~z)(\beta_{2}(x_{1},~\cdots,~x_{n-2},~u_{1},~y_{s})\nonumber\\&&
-\rho(y_{1},~\cdots,~y_{n-1})(\beta_{2}(x_{1},~\cdots,~x_{n-2},~u_{1},~z))\nonumber\\&&
+\displaystyle\sum_{s=1}^{n-1}\beta_{2}(y_{1},~\cdots,~y_{s-1},~\rho(x_{1},~\cdots,~x_{n-2},~y_{s})(u_{1}),~y_{s+1},~\cdots,~y_{n-1},~z)\nonumber\\
&&+\beta_{2}(x_{1},~\cdots,~x_{n-2},~u_{1},~[y_{1},~\cdots,~y_{n-1},~z]_{\mathfrak{g}})\nonumber\\
&&+\beta_{2}(y_{1},~\cdots,~y_{n-1},~\rho(x_{1},~\cdots,~x_{n-2},~z)(u_{1})),\label{29}\\
0&=&-(-1)^{n-2}\vartheta(y_{2})(x_{1},~\cdots,~x_{n-2},~u_{1},~y_{1}),~h_{3},~\cdots,~h_{n-1},~h_{n})\nonumber\\&&
  -(-1)^{n-1}\vartheta(y_{1})(x_{1},~\cdots,~x_{n-2},~u_{1},~y_{2}),~h_{3},~\cdots,~h_{n-1},~h_{n}),\label{30}\\
0&=&(-1)^{n-1}\vartheta(x_{1})(u_{2},~\cdots,~u_{n-1},~\beta_{2}(y_{1},~\cdots,~y_{n-1},~h_{n}))\nonumber\\&&-(-1)^{n-1}
\beta_{2}(y_{1},~\cdots,~y_{n-1},~\vartheta(x_{1})(u_{2},~\cdots,~u_{n-1},~h_{n})),\label{31}\\
0&=&(-1)^{n-1}\vartheta(x_{1})(u_{2},~\cdots,~u_{n-1},~\beta_{2}(y_{1},~\cdots,~y_{n-2},~h_{n-1},~y_{n}))\nonumber\\&&-(-1)^{n-1}
\beta_{2}(y_{1},~\cdots,~y_{n-2},~\vartheta(x_{1})(u_{2},~\cdots,~u_{n-1},~h_{n-1}),~y_{n}),\label{32}\\
0&=&-\displaystyle\sum_{s=1}^{n-1}\beta_{2}(y_{1},~\cdots,~\vartheta(y_{s})(u_{1},~\cdots,~u_{n-1}),~\cdots
,~y_{n})\nonumber\\&&- \beta_{2}(y_{1},~\cdots,~y_{n-1},~\vartheta(y_{n})(u_{1},~\cdots,~u_{n-1})),\label{33}\\
0&=&-\displaystyle\sum_{s=1}^{n-1}\vartheta(y_{n})(h_{1},~\cdots,~\beta_{2}(x_{1},~\cdots,~x_{n-1},~h_{i}),~\cdots,~h_{n-1})
\nonumber\\&&+\beta_{2}(x_{1},~\cdots,~x_{n-1},~\vartheta(y_{n})(h_{1},~\cdots,~h_{n-1})),\label{34}\\\nonumber
0&=&\rho(x_{1},~\cdots,~x_{n-1})(\beta_{1}(y_{1},~h_{2},~\cdots,~h_{n-1},~h_{n}))\nonumber\\&&-
\beta_{1}([x_{1},~\cdots,~x_{n-1},~y_{1}]_{\mathfrak{g}},~h_{2},~\cdots,~h_{n-1},~h_{n})\nonumber\\&&
-\displaystyle\sum_{s=2}^{n-1}\beta_{1}(y_{1},~h_{2},~\cdots,~h_{s-1},~\rho(x_{1},~\cdots,~x_{n-1})(h_{s}),~h_{s+1},~\cdots,~h_{n-1},~h_{n})
\nonumber\\&&-\beta_{1}(y_{1},~h_{2},~\cdots,~h_{n-1},~\rho(x_{1},~\cdots,~x_{n-1})(h_{n})),\label{35}
\end{eqnarray}
\begin{eqnarray}
0&=&-\displaystyle\sum_{s=1}^{n-1}\vartheta(y_{n})(h_{1},~\cdots,~\beta_{1}(x_{1},~\cdots,~x_{n-1},~h_{i}),~\cdots,~h_{n-1})\nonumber\\&&
+\rho(x_{1},~\cdots,~x_{n-1})(\beta_{1}(h_{1},~\cdots,~h_{n-1},~y_{n}))\nonumber\\
&&-\displaystyle\sum_{s=1}^{n-1}\beta_{1}(h_{1},~\cdots,~\rho(x_{1},~\cdots,~x_{n-1})(h_{i}),~\cdots,~h_{n-1},~y_{n})\nonumber\\
&&-\beta_{1}(h_{1},~\cdots,~h_{n-1},~[x_{1},~\cdots,~x_{n-1},~y_{n}]_{\mathfrak{g}}),\label{36}\\
0&=&-\rho(y_{1},~\cdots,~y_{n-1})(\beta_{1}(x_{1},~u_{2},~\cdots,~u_{n-1},~h_{n}))\nonumber\\&&
+\beta_{1}(x_{1},~u_{2},~\cdots,~u_{n-1},~\rho(y_{1},~\cdots,~y_{n-1})(h_{n})),\label{37}\\
0&=&- \displaystyle\sum_{s=2}^{n-1}  \vartheta(y_{1})(h_{2},~\cdots,~h_{s-1},~\beta_{1}(x_{1},~u_{2},~\cdots,~u_{n-1},~h_{s}),~h_{s+1},~\cdots,~h_{n-1},~h_{n})\nonumber\\&&
+(-1)^{n-1}\vartheta(x_{1})(u_{2},~\cdots,~u_{n-1},~\beta_{1}(y_{1},~h_{2},~\cdots,~h_{n-1},~h_{n}))\nonumber\\&&
-(-1)^{n-1}\vartheta(y_{1})(h_{2},~\cdots,~h_{n-1},~\beta_{1}(x_{1},~u_{2},~\cdots,~u_{n-1},~h_{n}))\nonumber\\
&&- \displaystyle\sum_{s=2}^{n-1} (-1)^{n-1}\beta_{1} (y_{1},~h_{2},~\cdots,~h_{s-1},~\vartheta(x_{1})(u_{2},~\cdots,~u_{n-1},~h_{s}),~h_{s+1},~\cdots,~h_{n-1},~h_{n})\nonumber\\&&
+(-1)^{n-1}\beta_{1}(x_{1},~u_{2},~\cdots,~u_{n-1},~\vartheta(y_{1})(h_{2},~\cdots,~h_{n-1},~h_{n}))\nonumber\\&&
-(-1)^{n-1}\beta_{1}(y_{1},~h_{2},~\cdots,~h_{n-1},~\vartheta(x_{1})(u_{2},~\cdots,~u_{n-1},~h_{n})),\label{38}\\
0&=&- \displaystyle\sum_{s=1}^{n-1}\vartheta(y_{n}) (h_{1},~\cdots,~\beta_{1}(x_{1},~u_{2},~\cdots,~u_{n-1},~h_{i}),~\cdots,~h_{n-1})\nonumber\\
&&+(-1)^{n-1}\vartheta(x_{1})(u_{2},~\cdots,~u_{n-1},~\beta_{1}(h_{1},~\cdots,~h_{n-1},~y_{n}))\nonumber\\
&&-\displaystyle\sum_{s=1}^{n-1}(-1)^{n-1}\beta_{1}(h_{1},~\cdots,~\vartheta(x_{1})(u_{2},~\cdots,~u_{n-1},~h_{i}),~\cdots,~h_{n-1},~y_{n})
\nonumber\\&&+\beta_{1}(x_{1},~u_{2},~\cdots,~u_{n-1},~\vartheta(y_{n})(h_{1},~\cdots,~h_{n-1})),\label{39}\\
0&=&-\displaystyle\sum_{s=1}^{n-1} (-1)^{n-s}\rho(y_{1},~\cdots,~y_{s-1},~\hat{y_{s}},~y_{s+1},~\cdots,~y_{n-1},~z)(\beta_{1}(u_{1},~\cdots,~u_{n-1},~y_{s}))
\nonumber\\&&- \rho(y_{1},~\cdots,~y_{n-1})(\beta_{1}(u_{1},~\cdots,~u_{n-1},~z))\nonumber\\&&+\beta_{1}(u_{1},~\cdots,~u_{n-1},~[y_{1},~\cdots
,~y_{n-1},~z]_{\mathfrak{g}}),\label{40}\\
0&=&-(-1)^{n-2}\vartheta(y_{2})(\beta_{1}(u_{1},~\cdots,~u_{n-1},~y_{1}),~h_{3},~\cdots,~h_{n-1},~h_{n})\nonumber\\&&
-(-1)^{n-1}\vartheta(y_{1})(\beta_{1}(u_{1},~\cdots,~u_{n-1},~y_{2}),~h_{3},~\cdots,~h_{n-1},~h_{n})\nonumber\\&&
 -\beta_{1}(\vartheta(y_{1})(u_{1},~\cdots,~u_{n-1}),~y_{2},~h_{3},~\cdots,~h_{n})\nonumber\\&&
-\beta_{1}(y_{1},~\vartheta(y_{2})(u_{1},~\cdots,~u_{n-1}),~h_{3},~\cdots,~h_{n}),\label{41}\\\nonumber
0&=&- \vartheta(y_{n})(\beta_{1}(u_{1},~\cdots,~u_{n-1},~y_{1}),~h_{2},~\cdots,~h_{n-1})\nonumber\\&&
-(-1)^{n-1}\vartheta(y_{1})(h_{2},~\cdots,~h_{n-1},~\beta_{1}(u_{1},~\cdots,~u_{n-1},~y_{n})\nonumber\\&&
-\beta_{1}( \vartheta(y_{1})(u_{1},~\cdots,~u_{n-1}),~h_{2},~\cdots,~h_{n-1},~y_{n})\nonumber\\&&-
\beta_{1}(y_{1},~h_{2},~\cdots,~h_{n-1},~\vartheta(y_{n})(u_{1},~\cdots,~u_{n-1})).\label{42}
\end{eqnarray}
\end{prop}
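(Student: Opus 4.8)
~~\\
\rm
The plan is to unwind the cocycle equation
\[
d(\beta_{1}+\beta_{2}+\beta_{3})=[\pi+\bar{\rho}+\bar{\vartheta},~\beta_{1}+\beta_{2}+\beta_{3}]^{nLie}=0
\]
and to split it according to how many of the arguments lie in $V$. Since $\beta_{1}+\beta_{2}+\beta_{3}$ is a $2$-cochain it sits in degree $1$ of the graded Lie algebra of Theorem \ref{m7}, so \eqref{7} gives
\[
d(\beta_{1}+\beta_{2}+\beta_{3})=-(\pi+\bar{\rho}+\bar{\vartheta})\circ(\beta_{1}+\beta_{2}+\beta_{3})-(\beta_{1}+\beta_{2}+\beta_{3})\circ(\pi+\bar{\rho}+\bar{\vartheta}),
\]
an element of $C^{2}_{>}(\mathfrak{g}\oplus V,~V)$. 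First I would write out the two circle products using Theorem \ref{m7}: here $p=q=1$, so the index set is $N=\{1,2,3\}$, the subset $J=\{j_{1},j_{2}\}$ runs over $\{1,2\},~\{1,3\},~\{2,3\}$ and $I=N/J$ is a singleton, whence each product has only three terms, carrying the elementary signs $(-1)^{(J,~I)}$ and $(-1)^{k}$. Then I would substitute the explicit formulas \eqref{10} and \eqref{10.} for $\bar{\vartheta}$ and $\bar{\rho}$, together with the bracket $\pi$, so that the inner map applied to a tuple becomes $[\cdot,~\cdots,~\cdot]_{\mathfrak{g}}$, $\rho(\cdot)$ or $\vartheta(\cdot)$ according to how many $V$-entries that tuple contains.

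The structural point that keeps this finite is that $C^{\bullet}_{>}(\mathfrak{g}\oplus V,~V)$ carries an auxiliary grading by the number of $V$-entries among the arguments, and the building blocks are homogeneous for it: $\pi$ is supported on purely $\mathfrak{g}$-tuples, $\bar{\rho}$ on tuples with exactly one $V$-entry, $\bar{\vartheta}$ on tuples with exactly $n-1$ $V$-entries, while $\beta_{3},~\beta_{2},~\beta_{1}$ are supported on tuples with $0,~1,~n-1$ $V$-entries respectively. Hence $d(\beta_{1}+\beta_{2}+\beta_{3})=0$ is equivalent to the vanishing of each of its finitely many homogeneous components, and each component, evaluated on arguments of the matching type, is precisely one of the identities \eqref{23}--\eqref{42}; in particular both implications of the proposition are obtained at once. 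I would organize the bookkeeping by the component of $\beta$ being differentiated: feeding only $\mathfrak{g}$-arguments isolates the usual coboundary $\delta_{\rho}\beta_{3}$ and yields \eqref{23} (this part can be read off from the equality $d|_{C^{p}(\mathfrak{g},V)}=\delta_{\rho}$ established just above); letting one or two slots be vectors of $V$ so that the $V$-valued output of $\beta_{3}$ is fed into $\bar{\vartheta}$ produces the coupling relations \eqref{24} and \eqref{25}; the configurations in which a single $V$-slot meets $\beta_{2}$, combined with $\pi$, $\bar{\rho}$ or $\bar{\vartheta}$ and with $\beta_{2}$ sitting inside or outside the other map, produce \eqref{26}--\eqref{34}; and the configurations with $n-1$ $V$-slots meeting $\beta_{1}$ produce \eqref{35}--\eqref{42}.

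For each fixed configuration the work reduces to three routine steps: (i) decide which of the three subsets $J$ can contribute, namely those for which the inner map lands on a tuple in its support; (ii) assemble the total coefficient from $(-1)^{(J,~I)}$ and $(-1)^{k}$ in \eqref{7}, the internal signs $(-1)^{n-i}$ of $\bar{\rho}$ and $\bar{\vartheta}$ in \eqref{10.} and \eqref{10}, and the overall sign of $d$; (iii) collect the terms and match them with the stated identity. The hard part will be step (ii): propagating these Koszul-type signs consistently through all twenty configurations is where an error would be most likely, while everything else is a mechanical unwinding of the definitions. As consistency checks one can use $d(\alpha)=\delta_{\rho}(\alpha)$ for $\alpha\in C^{p}(\mathfrak{g},V)$ together with Lemma \ref{m10} to fix the normalization of \eqref{23}, and Proposition \ref{m20} (which rests on $[\pi+\bar{\rho}+\bar{\vartheta},~\pi+\bar{\rho}+\bar{\vartheta}]^{nLie}=0$ from Definition \ref{m12}) to confirm that the resulting system is compatible with $d^{2}=0$.
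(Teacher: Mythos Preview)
Your proposal is correct and follows essentially the same approach as the paper: both evaluate $d(\beta_{1}+\beta_{2}+\beta_{3})$ on the various argument configurations in $\wedge^{n-1}(\mathfrak{g}\oplus V)\otimes\wedge^{n-1}(\mathfrak{g}\oplus V)\wedge(\mathfrak{g}\oplus V)$ and read off one identity per configuration. The paper simply lists the twenty evaluations of $d(\beta_{3})$, $d(\beta_{2})$, $d(\beta_{1})$ on the relevant input types and concludes; your auxiliary grading by the number of $V$-entries is a helpful organizing device that the paper leaves implicit, but the computational content and the resulting decoupling into \eqref{23}--\eqref{42} are identical.
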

\begin{proof}
 ~~\\
\rm
For $\beta_{3}\in Hom(\wedge^{n}\mathfrak{g},~V)$, we have
\begin{description}
 \item[$\ast$] $d(\beta_{3})(x_{1}~\wedge~\cdots~\wedge ~x_{n-1},~y_{1}~\wedge~\cdots~\wedge ~y_{n-1},~z)=\\
-\displaystyle\sum_{s=1}^{n-1} (-1)^{n-s}\rho(y_{1},~\cdots,~\hat{y_{s}},~\cdots,~y_{n-1},~z)(\beta_{3}(x_{1},~\cdots,~ x_{n-1},~y_{s}))\\
+\rho(x_{1},~\cdots,~ x_{n-1})(\beta_{3}(y_{1},~\cdots,~y_{n-1},~z))-\rho(y_{1},~\cdots,~ y_{n-1})(\beta_{3}(x_{1},~\cdots,~x_{n-1},~z))\\
- \displaystyle\sum_{s=1}^{n-1} \beta_{3}(y_{1},~\cdots,~y_{s-1},~[x_{1},~\cdots,~ x_{n-1},~y_{s}]_{\mathfrak{g}},~y_{s+1},~\cdots,~y_{n-1},~z)\\
+\beta_{3}(x_{1},~\cdots,~ x_{n-1},~[y_{1},~\cdots,~y_{n-1},~z]_{\mathfrak{g}})-\beta_{3}(y_{1},~\cdots,~ y_{n-1},~[x_{1},~\cdots,~x_{n-1},~z]_{\mathfrak{g}}),$
 \item[$\ast$]$d(\beta_{3})(x_{1},~\cdots,~ x_{n-1},~y_{1},~y_{2},~h_{3},~\cdots,~h_{n-1},~h_{n}) =\\
-(-1)^{n-2}\vartheta(y_{2})(\beta_{3}(x_{1},~\cdots,~ x_{n-1},~y_{1}),~h_{3},~\cdots,~h_{n-1},~h_{n})\\-(-1)^{n-1}\vartheta(y_{1})(\beta_{3}(x_{1},~\cdots,~ x_{n-1},~y_{2}),~h_{3},~\cdots,~h_{n-1},~h_{n}),$
\item [$\ast$]$d(\beta_{3})(x_{1}~\wedge~u_{2}~\wedge\cdots~\wedge ~u_{n-1},~y_{1},~\cdots,~ y_{n-1},~z)\\
= (-1)^{n-1}\vartheta(x_{1})(u_{2},~\cdots,~u_{n-1},~\beta_{3}(y_{1},~\cdots,~ y_{n-1},~z)).$\\
For $\beta_{2}\in Hom(\wedge^{n-1}\mathfrak{g}\wedge V,~V)$ we have
\item[$\ast$]$d(\beta_{2})(x_{1}~\wedge~\cdots~\wedge ~x_{n-1},~y_{1}~\wedge~\cdots~\wedge ~y_{n-1},~h_{1})=\\
\rho(x_{1},~\cdots,~x_{n-1})(\beta_{2}(y_{1},~\cdots,~y_{n-1},~h_{1}))\\-\rho(y_{1},~\cdots,~y_{n-1})(\beta_{2}(x_{1},~\cdots,~x_{n-1},~h_{1}))
 \\- \displaystyle \sum_{s=1}^{n-1}\beta_{2}(y_{1},~\cdots,~y_{s-1},~\pi(x_{1},~\cdots,~x_{n-1},~y_{s}),~y_{s+1},~\cdots,~y_{n-1},~h_{1})\\
+\beta_{2}(x_{1},~\cdots,~x_{n-1},~\rho(y_{1},~\cdots,~y_{n-1})(h_{1}))\\
-\beta_{2}(y_{1},~\cdots,~y_{n-1},~\rho(x_{1},~\cdots,~x_{n-1})(h_{1})),$
\item[$\ast$] $d(\beta_{2})(x_{1}~\wedge~\cdots~\wedge ~x_{n-1},~y_{1}~\wedge~\cdots~\wedge ~y_{n-2},~h_{n-1},~y_{n})=\\
\rho(y_{1},~\cdots,~y_{n-2},~y_{n})(\beta_{2}(x_{1},~\cdots,~x_{n-1},~h_{n-1}))\\+\rho(x_{1},~\cdots,~x_{n-1})
(\beta_{2}(y_{1},~\cdots,~y_{n-2},~h_{n-1},~y_{n}))
 \\- \displaystyle \sum_{s=1}^{n-2}\beta_{2}(y_{1},~\cdots,~y_{s-1},~\pi(x_{1},~\cdots,~x_{n-1},~y_{s}),~y_{s+1},~\cdots,~y_{n-2},~h_{n-1},~y_{n})\\
-\beta_{2}(y_{1},~\cdots,~y_{n-2},~\rho(x_{1},~\cdots,~x_{n-1})(h_{n-1}),~y_{n})\\
+\beta_{2}(x_{1},~\cdots,~x_{n-1},~\rho(y_{1},~\cdots,~y_{n-2},~y_{n})(h_{n-1}))\\
-\beta_{2}(y_{1},~\cdots,~y_{n-2},~h_{n-1},~\pi(x_{1},~\cdots,~x_{n-1},~y_{n}))$,
\item[$\ast$] $d(\beta_{2})(x_{1}~\wedge~\cdots~\wedge ~x_{n-1},~y_{1}~\wedge ~h_{2}\wedge ~\cdots~\wedge ~h_{n-1},~h_{n})
=\\-\displaystyle\sum_{s=2}^{n-1}(-1)^{n-1}\vartheta(y_{1})(h_{2},~\cdots,~h_{s-1},~\beta_{2}(x_{1},~\cdots
,~x_{n-1},~h_{s}),~h_{s+1},~\cdots,~h_{n-1},~h_{n})\\
 -(-1)^{n-1}\vartheta(y_{1})(h_{2},~\cdots,~h_{n-1},~\beta_{2}(x_{1},~\cdots,~x_{n-1},~h_{n}))\\
+(-1)^{n-1}\beta_{2}(x_{1},~\cdots,~x_{n-1},~\vartheta(y_{1})(h_{2},~\cdots,~h_{n-1},~h_{n})),$
\item[$\ast$] $d(x_{1}~\wedge~\cdots~\wedge ~x_{n-2},~u_{1},~y_{1}~\wedge~\cdots~\wedge ~y_{n-1},~z)
 =\\-\displaystyle\sum_{s=1}^{n-1}(-1)^{n-s}\rho(y_{1},~\cdots,~\hat{y_{s}},~\cdots,~y_{n-1},~z)(\beta_{2}(x_{1},~\cdots,~x_{n-2},~u_{1},~y_{s})\\
-\rho(y_{1},~\cdots,~y_{n-1})(\beta_{2}(x_{1},~\cdots,~x_{n-2},~u_{1},~z))\\
+\displaystyle\sum_{s=1}^{n-1}\beta_{2}(y_{1},~\cdots,~y_{s-1},~\rho(x_{1},~\cdots,~x_{n-2},~y_{s})(u_{1}),~y_{s+1},~\cdots,~y_{n-1},~z
)\\
+\beta_{2}(x_{1},~\cdots,~x_{n-2},~u_{1},~\pi(y_{1},~\cdots,~y_{n-1},~z))\\
-\beta_{2}(y_{1},~\cdots,~y_{n-1},~\rho(x_{1},~\cdots,~x_{n-2},~z)(u_{1})),$
\item[$\ast$] $d(\beta_{2})(x_{1}~\wedge~\cdots~\wedge ~x_{n-2}~\wedge ~u_{1},~y_{1}~\wedge~y_{2}~\wedge~h_{3}~\wedge\cdots~\wedge ~h_{n-1},~h_{n})=\\-(-1)^{n-2}\vartheta(y_{2})(x_{1},~\cdots,~x_{n-2},~u_{1},~y_{1}),~h_{3},~\cdots,~h_{n-1},~h_{n})\\
  -(-1)^{n-1}\vartheta(y_{1})(x_{1},~\cdots,~x_{n-2},~u_{1},~y_{2}),~h_{3},~\cdots,~h_{n-1},~h_{n}),$
\item[$\ast$]$ d ( \beta_{2})(x_{1}~\wedge~u_{2}~\wedge ~\cdots~\wedge ~u_{n-1},~y_{1}~\wedge~\cdots~\wedge ~y_{n-1},~h_{n})=\\
(-1)^{n-1}\vartheta(x_{1})(u_{2},~\cdots,~u_{n-1},~\beta_{2}(y_{1},~\cdots, ~y_{n-1}, ~h_{n})\\
-(-1)^{n-1}\beta_{2}(y_{1},~\cdots, ~y_{n-1}, ~\vartheta(x_{1})(u_{2},~\cdots,~u_{n-1},~h_{n})),$
\item[$\ast$]$d(\beta_{2})(x_{1}~\wedge~ u_{2}~\wedge~\cdots~\wedge ~u_{n-1},~y_{1}~\wedge~\cdots~\wedge y_{n-2},~h_{n-1},~y_{n} ) =\\(-1)^{n-1}\vartheta(x_{1})(u_{2},~\cdots,~u_{n-1},~\beta_{2} ( y_{1},~\cdots,~ y_{n-2},~h_{n-1},~y_{n})\\
-(-1)^{n-1}\beta_{2}(y_{1},~ \cdots,~y_{n-2},~\vartheta(x_{1})(u_{2},~\cdots,~u_{n-1},~h_{n-1}),~y_{n}),$
\item[$\ast$] $ d ( \beta_{2})(u_{1}~\wedge~\cdots~\wedge ~u_{n-1},~y_{1}~\wedge~\cdots~\wedge ~y_{n-1},~z)=\\- \displaystyle\sum_{s=1}^{n-1}\beta_{2}(y_{1},~\cdots,~y_{s-1},~\vartheta(y_{s})(u_{1},~\cdots,~u_{n-1}),~y_{s+1},~\cdots,~y_{n-1},~z)\\
  -\beta_{2}(y_{1},~\cdots,~y_{n-1},~\vartheta(z)(u_{1},~\cdots,~u_{n-1})),$
\item[$\ast$] $d(\beta_{2})(x_{1}~\wedge~\cdots~\wedge ~x_{n-1},~h_{1}~\wedge~\cdots~\wedge h_{n-1},~y_{n} ) =\\-\displaystyle\sum_{s=1}^{n-1}\vartheta(y_{n})(h_{1},~\cdots,~\beta_{2} ( x_{1},~\cdots,~ x_{n-1},~h_{i}),~\cdots,~h_{n-1})\\
+\beta_{2}(x_{1},~ \cdots,~x_{n-1},~\vartheta(y_{n})(h_{1},~\cdots,~h_{n-1})).$\\
For $\beta_{1}\in Hom(\wedge^{n-1}V \wedge\mathfrak{g},~V)$, we have
\item [$\ast$] $ d(\beta_{1})(x_{1}~\wedge~\cdots~\wedge~x_{n-1},~y_{1}~\wedge~h_{2}~\wedge~\cdots~\wedge~h_{n-1},~h_{n})=\\
\rho(x_{1},~\cdots,~x_{n-1})(\beta_{1}(y_{1},~h_{2},~\cdots,~h_{n}))\\
-\beta_{1}([x_{1},~\cdots,~x_{n-1},~y_{1}]_{\mathfrak{g}},~h_{2},~\cdots,~h_{n-1},~h_{n})\\
-\displaystyle\sum_{s=2}^{n-1}\beta_{1}(y_{1},~h_{2},~\cdots,~h_{n-1},~\rho(x_{1},~\cdots,~x_{n-1})(h_{s}),~\cdots,~h_{n})\\
-\beta_{1}(y_{1},~h_{2},~\cdots,~h_{n-1},~\rho(x_{1},~\cdots,~x_{n-1})(h_{n})),$
\item[$\ast$] $ d(\beta_{1})(x_{1}~\wedge~\cdots~\wedge~x_{n-1},~h_{1}~\wedge~\cdots~\wedge~h_{n-1},~y_{n})=\\
-\displaystyle\sum_{s=1}^{n-1}
\vartheta(y_{n})(h_{1},~\cdots,~\beta_{1}(x_{1},~\cdots,~x_{n-1},~h_{i}),~\cdots ,~h_{n-1} )+\rho(x_{1},~\cdots,~x_{n-1})(\beta_{1}(h_{1},~\cdots,~h_{n-1},~y_{n}))\\
-\displaystyle\sum_{s=1}^{n-1}\beta_{1}(h_{1},~\cdots,~\rho(x_{1},~\cdots,~x_{n-1})(h_{s}),~\cdots,~h_{n-1},~y_{n})
-\beta_{1}(h_{1},~\cdots,~h_{n-1},~[x_{1},~\cdots,~x_{n-1},~y_{n}]_{\mathfrak{g}}),$
\item [$\ast$] $ d(\beta_{1})(x_{1}~\wedge~u_{2}~\wedge~\cdots~\wedge~u_{n-1},~y_{1}~\wedge~\cdots~\wedge ~y_{n-1},~h_{n})=\\
-\rho(y_{1},~\cdots,~y_{n-1})(\beta_{1}(x_{1},~u_{2},~\cdots,~u_{n-1},~h_{n}))\\
+\beta_{1}(x_{1},~u_{2},~\cdots,~u_{n-1},~\rho(y_{1},~\cdots,~y_{n-1})(h_{n})),$
\item [$\ast$] $d(\beta_{1})(x_{1}~\wedge~u_{2}~\wedge\cdots~\wedge ~u_{n-1},~y_{1}~\wedge~h_{2}~\wedge~\cdots~\wedge ~h_{n-1},~h_{n})=\\
- \displaystyle\sum_{s=2}^{n-1}  \vartheta(y_{1})(h_{2},~\cdots,~h_{s-1},~\beta_{1}(x_{1},~u_{2},~\cdots,~u_{n-1},~h_{s}),~h_{s+1},~\cdots,~h_{n-1},~h_{n})\\
+(-1)^{n-1}\vartheta(x_{1})(u_{2},~\cdots,~u_{n-1},~\beta_{1}(y_{1},~h_{2},~\cdots,~h_{n-1},~h_{n}))\\
-(-1)^{n-1}\vartheta(y_{1})(~h_{2},~\cdots,~h_{n-1},~\beta_{1}(x_{1},~u_{2},~\cdots,~u_{n-1},~h_{n}))\\
-\displaystyle\sum_{s=2}^{n-1}(-1)^{n-1} \beta_{1} (y_{1},~h_{2},~\cdots,~h_{s-1},~\vartheta(x_{1})(u_{2},~\cdots,~u_{n-1},~h_{s}),~h_{s+1},~\cdots,~h_{n-1},~h_{n})\\
+(-1)^{n-1}\beta_{1}(x_{1},~u_{2},~\cdots,~u_{n-1},~\vartheta(y_{1})(h_{2},~\cdots,~h_{n-1},~h_{n}))\\
-(-1)^{n-1}\beta_{1}(y_{1},~h_{2},~\cdots,~h_{n-1},~\vartheta(x_{1})(u_{2},~\cdots,~u_{n-1},~h_{n}))
,$
\item [$\ast$] $d(\beta_{1})(x_{1}~\wedge~u_{2}~\wedge\cdots~\wedge ~u_{n-1},~h_{1}~\wedge~\cdots~\wedge ~h_{n-1},~y_{n})=\\
- \displaystyle\sum_{s=1}^{n-1}  \vartheta(y_{n})(h_{1},~\cdots,~\beta_{1}(x_{1},~u_{2},~\cdots,~u_{n-1},~h_{s}),~\cdots,~h_{n-1})\\
+(-1)^{n-1}\vartheta(x_{1})(u_{2},~\cdots,~u_{n-1},~\beta_{1}(h_{1},~\cdots,~h_{n-1},~y_{n}))\\
- \displaystyle\sum_{s=1}^{n-1}(-1)^{n-1}  \beta_{1} (h_{1},~\cdots,~\vartheta(x_{1})(u_{2},~\cdots,~u_{n-1},~h_{s}),~\cdots,~h_{n-1},~y_{1})\\
+ \beta_{1}(x_{1},~u_{2},~\cdots,~u_{n-1},~\vartheta(y_{n})(h_{1},~\cdots,~h_{n-1}),$
\item [$\ast$] $d(\beta_{1})(u_{1}~\wedge~\cdots~\wedge ~u_{n-1},~y_{1}~\wedge~\cdots~\wedge ~y_{n-1},~z)=\\
-\displaystyle\sum_{s=1}^{n-1} (-1)^{n-s}\rho(y_{1},~\cdots,~y_{s-1},~\hat{y_{s}},~y_{s+1},~\cdots,~y_{n-1},~z)(\beta_{1}(u_{1},~\cdots,~u_{n-1},~y_{s}))
\\- \rho(y_{1},~\cdots,~y_{n-1})(\beta_{1}(u_{1},~\cdots,~u_{n-1},~z))\\+\beta_{1}(u_{1},~\cdots,~u_{n-1},~\pi(y_{1},~\cdots,~y_{n-1},~z)),$
\item [$\ast$]$d(\beta_{1})(u_{1}~\wedge~\cdots~\wedge ~u_{n-1},~y_{1}~\wedge~y_{2}~\wedge~\cdots~\wedge ~y_{n-1},~h_{n})=\\
-(-1)^{n-2}\vartheta(y_{2})(\beta_{1}(u_{1},~\cdots,~u_{n-1},~y_{1}),~h_{3},~\cdots,~h_{n-1},~h_{n})\\
-(-1)^{n-1}\vartheta(y_{1})(\beta_{1}(u_{1},~\cdots,~u_{n-1},~y_{2}),~h_{3},~\cdots,~h_{n-1},~h_{n})\\
 -\beta_{1}(\vartheta(y_{1})(u_{1},~\cdots,~u_{n-1}),~y_{2},~h_{3},~\cdots,~h_{n})\\
-\beta_{1}(y_{1},~\vartheta(y_{2})(u_{1},~\cdots,~u_{n-1}),~h_{3},~\cdots,~h_{n}),$
\item [$\ast$]$d(\beta_{1})(u_{1}~\wedge~\cdots~\wedge ~u_{n-1},~y_{1}~\wedge~h_{2}~\wedge~\cdots~\wedge ~h_{n-1},~y_{n})=\\
-\vartheta(y_{n})(\beta_{1}(u_{1},~\cdots,~u_{n-1},~y_{1}),~h_{2},~\cdots,~h_{n-1})\\
-(-1)^{n-1}\vartheta(y_{1})(h_{2},~\cdots,~h_{n-1},~\beta_{1}(u_{1},~\cdots,~u_{n-1},~y_{n}))\\
 -\beta_{1}(\vartheta(y_{1})(u_{1},~\cdots,~u_{n-1}),~h_{2},~\cdots,~h_{n-1},~y_{n})\\
-\beta_{1}(y_{1},~h_{2},~\cdots,~h_{n-1},~\vartheta(y_{n})(u_{1},~\cdots,~u_{n-1})).$
\end{description}
Hence,~ $d(\beta_{1}+\beta_{2} +\beta_{3})= 0$ if and only if Eqs.\eqref{23}-\eqref{42} hold.
\end{proof}


\begin{thebibliography}{999}
 \bibitem{AI1}  de Azcarraga, J.A. and Izquierdo, J.M., 2009. {\em Cohomology of Filippov algebras and an analogue of Whitehead's lemma.} In Journal of Physics. Conference Series {\bf175}(1) p. 012001

\bibitem {AI2}  de Azcarraga, J.A. and Izquierdo, J.M., 2010. {\em $n$-ary algebras: A review with applications.} Journal of Physics A: Mathematical and Theoretical, {\bf 43}(29), p.293001.

\bibitem{BW}Bai, R., Bai, C. and Wang, J., 2010. {\em Realizations of $3$-Lie algebras.} Journal of mathematical physics, {\bf 51}(6), p.063505.
	
\bibitem {BSZ} Bai, R., Song, G. and Zhang, Y., 2011. {\em On classification of $n$-Lie algebras.} Frontiers of Mathematics in China, {\bf6}(4), pp.581-606.
	
\bibitem{BGS} Bai, C., Guo, L. and Sheng, Y., 2019. {\em Bialgebras, the classical Yang–Baxter equation and Manin triples for $3$-Lie algebras.} Advances in Theoretical and Mathematical Physics,{\bf 23}(1), pp.27-74.
	
\bibitem{CT} Chatterjee, R. and Takhtajan, L., 1996. {\em Aspects of classical and quantum Nambu mechanics.} Letters in Mathematical Physics, {\bf 37}(4), pp.475-482.
	
\bibitem {DT} Daletskii, Y.L. and Takhtajan, L.A., 1997. {\em Leibniz and Lie algebra structures for Nambu algebra.} Letters in Mathematical Physics, {\bf39}(2), pp.127-141.

\bibitem{F} Filippov, V.T., 1985. {\em $n$-Lie algebras.} Siberian Mathematical Journal, {\bf26}(6), pp. 879-891.

\bibitem {FJ} Figueroa-O'Farrill, J.M., 2009. {\em Deformations of $3$-algebras.} Journal of mathematical physics, {\bf 50}(11), p.113514.

\bibitem {G} Gautheron, P., 1996. {\em Some remarks concerning Nambu mechanics.} Letters in Mathematical Physics, {\bf37}(1), pp.103-116.

\bibitem{H}Hoppe, J., 1997. {\em On M-algebras, the quantisation of Nambu-mechanics, and volume preserving diffeomorphisms.} Helvetica Physica Acta, {\bf 70}.
	
\bibitem{LSZB}Liu, J.F., Sheng, Y.H., Zhou, Y.Q. and Bai, C.M., 2016. {\em Nijenhuis Operators on $n$-Lie Algebras.} Communications in Theoretical Physics, {\bf65}(6), p.659.

\bibitem {MA} Makhlouf, A., 2016. {\em On Deformations of $n$-Lie Algebras.} Non-Associative and Non-Commutative Algebra and Operator Theory, p.55.


 \bibitem {NY}  Nambu, Y., 1995. {\em Generalized hamiltonian dynamics.} In Broken Symmetry: Selected Papers of Y Nambu (pp. 302-309).

\bibitem {RM} Rotkiewicz, M., 2005. {\em Cohomology ring of $n$-Lie algebras. Extracta mathematicae}, {\bf 20}(3), pp.219-232.

\bibitem{ST}Song, L. and Tang, R., 2019. {\em Cohomologies, deformations and extensions of $n$-Hom-Lie algebras.} Journal of Geometry and Physics, {\bf141}, pp.65-78.


\bibitem {T}  Takhtajan, L., 1994. {\em On foundation of the generalized Nambu mechanics}. Communications in Mathematical Physics, {\bf160}(2), pp.295-315.

\bibitem {TL}Takhtajan, L.: {\em A higher order analog of Chevalley-Eilenberg complex and deformation theory of $n$-algebras.} St. Petersburg Math. J. {\bf 6}, 429-438 (1995)

  \end{thebibliography}
\end{document}